\documentclass[12pt, reqno]{amsart}
\usepackage{amsmath, amsthm, amscd, amsfonts, amssymb, graphicx, color}
\usepackage[bookmarksnumbered, colorlinks, plainpages]{hyperref}
\hypersetup{colorlinks=true,linkcolor=red, anchorcolor=green, citecolor=cyan, urlcolor=red, filecolor=magenta, pdftoolbar=true}

\textheight 22.5truecm \textwidth 14.5truecm
\setlength{\oddsidemargin}{0.35in}\setlength{\evensidemargin}{0.35in}

\setlength{\topmargin}{-.5cm}

\newtheorem{theorem}{Theorem}[section]
\newtheorem{lemma}[theorem]{Lemma}
\newtheorem{proposition}[theorem]{Proposition}
\newtheorem{corollary}[theorem]{Corollary}
\theoremstyle{definition}
\newtheorem{definition}[theorem]{Definition}
\newtheorem{example}[theorem]{Example}

\theoremstyle{remark}
\newtheorem{remark}[theorem]{Remark}
\numberwithin{equation}{section}

\begin{document}
\setcounter{page}{1}

\title[Harnack type inequality]
{Harnack type inequalities for operators in logarithmic submajorisation}

\author[Y. Han]{Yazhou Han}

\address{College of Mathematics and Systems Science, Xinjiang
University, Urumqi 830046, China}
\email{\textcolor[rgb]{0.00,0.00,0.84}{hanyazhou@foxmail.com}}

\author[C. Yan]{Cheng Yan}
\address{College of Mathematics and Systems Science, Xinjiang
University, Urumqi 830046, China}
\email{\textcolor[rgb]{0.00,0.00,0.84}{yanchengggg@163.com}}

\subjclass[2010]{ Primary 47A63; Secondary 46L52. }

\keywords{Logarithmic submajorisation; von Neumann algebra;
Harnack type inequality; Fuglede-\,-Kadison determinant.}

\begin{abstract} The aim of this paper is to study the Harnack type
logarithmic submajorisation and Fuglede-\,-Kadison determinant
inequalities for operators in a  finite von Neumann algebra.
In particular, the Harnack type determinant inequalities due to
Lin-\,-Zhang\cite{LZ2017} and Yang-\,-Zhang\cite{YZ2020} are
extended to the case of operators in a finite von Neumann algebra.
\end{abstract}
\maketitle

\section{Introduction}
The classical Harnack inequality, named after
Carl Gustav Axel von Harnack,
gives an estimate from above and an estimate from
below for a positive harmonic function in a domain.
Even though the classical Harnack inequality is almost trivially derived from the Poisson
formula, the consequences that may be deduced from Harnack inequality are particularly of great importance.
Later, these inequalities became an important tool in the general theory of
harmonic functions and partial differential equations.
There exist as yet extensive works on
generalized Harnack inequalities in various forms,
see \cite{M2007, W2013, YZ2020} for a nice introduction about the inequality.
The purpose of this paper is to investigate the Harnack type determinant inequality for operators and matrices.

With the help of Lagrange multiplier method, the following Harnack type
determinant inequality was established by Tung\cite{T1964}, as a tool to study Harnack inequality:
If $Z\in \mathbb{M}_n$ is a complex matrix with singular values
$r_k$ with $0\leq r_k<1, k=1, 2, ..., n$, then
\begin{equation}\label{inequ. T1}
\prod_{k=1}^n\frac{1-r_k}{1+r_k}\leq\frac{\det(\mathbb{I}-Z^*Z)}{|\det(\mathbb{I}-UZ)|^2}\leq
\prod_{k=1}^n\frac{1+r_k}{1-r_k}, U\in \mathbb{U}_n,
\end{equation} where $\mathbb{U}_n$ denotes the set of all $n\times n$ unitary
matrices $U$.
From these bounds Tung obtained upper and lower
bounds of a Poisson kernel on $\mathbb{U}_n$(see \cite{T1964}), hence that the so-called Harnack's
first and second theorems are established. Tung's work drew immediate attention
of Hua and Marcus.
Using majorisation theory and singular value (eigenvalue)
inequalities of Weyl, Marcus \cite{M1965} gave another proof of (\ref{inequ. T1})
and gave an equivalent form of (\ref{inequ. T1}).
 Almost at the same time,
a proof of (\ref{inequ. T1}) was also given by Hua\cite{H1965} based on the
determinantal inequality he had previously obtained in\cite{H1955}.
In the past decades,
Tung's work has attracted attentions of mathematicians and been extended to various setting
(see
 \cite{JL2020, LZ2017, M2007, W2013, YZ2020} and the references therein
for more details).
Among these outstanding works we will be interested in
Lin-\,-Zhang's and Yang-\,-Zhang's work.
Specifically, with $A=UZ$, (\ref{inequ. T1}) is equivalently
rewritten in terms of eigenvalues (\cite{JL2020, YZ2020}) as
\begin{equation}\label{inequ. YZ1}
\prod_{k=1}^n\frac{1-r_k}{1+r_k}\leq\prod_{k=1}^n
\lambda_k((\mathbb{I}-A^*)^{-1}(\mathbb{I}-A^*A)(\mathbb{I}-A)^{-1})
\leq\prod_{k=1}^n\frac{1+r_k}{1-r_k},
\end{equation}
where $Z\in \mathbb{M}_n$ is a complex matrix with singular values
$r_k$ with $0\leq r_k<1, k=1, 2, ..., n$ and $U\in \mathbb{U}_n$.
(\ref{inequ. YZ1}) leads to the study of inequalities of
logarithmic submajorisation  of
eigenvalues and singular values. Following this line,
an interesting generalization of (\ref{inequ. YZ1}) is
presented by Yang-\,-Zhang\cite{YZ2020} and
Jiang-\,-Lin\cite{JL2020} as follows:
\begin{equation}\label{FK det 1}
\prod_{k\in K}
\lambda_k((\mathbb{I}-A^*)^{-1}(\mathbb{I}-A^*A)(\mathbb{I}-A)^{-1})
 \leq\prod_{k\in K}\frac{1+r_k}{1-r_k},
\end{equation}
\begin{equation}\label{FK det 2}
\prod_{i\in K}
\lambda_{n-k+1}((\mathbb{I}-A^*)^{-1}(\mathbb{I}-A^*A)(\mathbb{I}-A)^{-1})
\geq\prod_{k\in K}(1-r_k^2)\prod_{i=1}^{|K|}\frac{1}{(1+r_i)^2},
\end{equation}
where $K$ is a subset of $\{r_1, r_2, \cdots, r_n\}$
and  $|K|$ denotes the number of terms in $K$.
The main theme of the paper is to continue with Jiang-\,-Lin and Yang-\,-Zhang's work
and to show their results hold in the case of operators in finite von Neumann algebras.

We are concerned with the Harnack type logarithmic submajorisation
inequality and
Fuglede-Kadison determinant
inequality for operators in a finite von Neumann algebra.
The properties of the logarithmic submajorisation and
Fuglede-\,-Kadison determinant for operators in a finite
von Neumann algebra was investigated by many authors, see
for example \cite{B1983, BL2008, HSZ2020}.
Those properties are important, for example,
in investigation of noncommutative Hardy spaces
and invariant subspaces for operators in von Neumann algebras.
By adapting the techniques in \cite{YZ2020, FK1986, N1987}, we obtain some
 inequalities which is related to the
Harnack type logarithmic submajorisation inequality
and Fuglede-\,-Kadison determinant inequality. In particular,
we show that the inequalities (\ref{FK det 1}) and (\ref{FK det 2}) hold for
 operators in a finite von Neumann algebra.
We will conclude this paper with a series of logarithmic submajorisation
 inequalities which is related to  Cayley transform.

\section{Preliminaries}
\subsection{von Neumann algebras}
Suppose that $\mathcal{H}$ is a separable Hilbert space over the field $\mathbb{C}$ and
$\mathbb{I}$ is the identity operator in $\mathcal{H}$. We will denote by
  $\mathcal{B}(\mathcal{H})$ the $*$-algebra of all linear bounded operators in $\mathcal{H}$.
Let $\mathcal{M}$ be a $*$-subalgebra of
$\mathcal{B}(\mathcal{H})$ containing the identity operator $\mathbb{I}$.
Then $\mathcal{M}$ is called a von Neumann algebra
if $\mathcal{M}$ is weak operator closed.
Let $\mathcal{M}^+$ denote the positive part of $\mathcal{M}$.
We recall that a weight on $\mathcal{M}$
is a map $\tau: \mathcal{M}^+\rightarrow [0, \infty]$ satisfying
 \begin{enumerate}
\item $\tau(x+y)=\tau(x)+\tau(y),$ for all $x, y\in \mathcal{M}^+$;
\item $\tau(\alpha x)=\alpha\tau(x)$ for all $x\in \mathcal{M}^+$ and $\alpha\in [0, \infty)$,
with the convention $0\cdot\infty=0.$
\end{enumerate}

The weight $\tau$ is called faithful if $\tau(x^*x)=0$ implies $x=0$, normal if
$x_i\uparrow_i x$ in $\mathcal{M}^+$ implies that $0\leq\tau(x_i)\uparrow_i\tau(x)$,
tracial if $\tau(x^*x)=\tau(xx^*)$ for all $x\in\mathcal{M}$.
Note that since $(x_i)$ is bounded there is $x$ in $\mathcal{M}^+$ such
that, for any $h$ in $\mathcal{H}$, $\langle x_ih, h\rangle\uparrow\langle xh, h\rangle$,
which implies that $x_i$ tends to $x$ weak*
and hence $x\in \mathcal{M}^+$.
The operator $x$ is obviously the least upper bound of $(x_i)$,
it is natural to denote it by $\sup_i x_i$.
The self-adjoint part of $\mathcal{M}$, $\mathcal{M}^{sa}$, is a partially ordered vector space under the
ordering $x\geq0$ defined by $\langle x\xi, \xi\rangle\geq0, \xi\in \mathcal{H}$.
Recall that $x\in\mathcal{M}$
is contractive if $\|x\|\leq1$ and strictly contractive if $\|x\|<1$.
Moreover, if $x$ is strict contractive, then $\mathbb{I}-x^*x$ is invertible and $\mathbb{I}-x^*x\geq0$.

It is also customary to say trace instead of tracial weight.
A trace $\tau$ is called finite if $\tau(\mathbb{I})<\infty.$
A finite trace $\tau$ is extended uniquely to a positive
linear functional on $\mathcal{M}$ which will also
be denoted by $\tau.$ A positive linear functional
$\tau$ on a von Neumann algebra is said to be a state if $\tau(\mathbb{I})=1$.

A von Neumann algebra $\mathcal{M}$ is called finite if the family formed of the finite normal traces
separates the points of $\mathcal{M}$. Clearly this happens if $\mathcal{M}$ admits a
single faithful normal finite trace.
But a finite $\mathcal{M}$ may fail to have any faithful finite trace,
for instance $\mathcal{M} = \ell^\infty(\mathbb{R})$ where $\mathbb{R}$ is equipped with counting measure.
However, on a separable Hilbert space (i.e. if $\mathcal{M}$ is weak*-separable) the converse is also true i.e.,
$\mathcal{M}$ is finite if and only if it admits a faithful normal finite trace.

In what follows, we will keep all previous notations
throughout the paper, and $\mathcal{M}$  will always denote a finite von
Neumann algebra acting on a separable Hilbert space $\mathcal{H}$,
with a normal faithful finite tracial state $\tau$, i.e.,
a  normal faithful finite trace $\tau$ satisfies that $\tau(\mathbb{I})=1$.
We refer to \cite{T1979} for von Neumann algebras.

\subsection{The eigenvalue function and generalized singular value function}
\begin{definition}
Let $x\in \mathcal{M}$ and $t>0.$
The ``$t$-th singular number(or generalized singular number)
 of $x$" $\mu_t(x)$ is defined by
$$\mu_t(x)=\inf\{\|xe\|: e~\mbox{is a projection in}~
\mathcal{M}~\mbox{with}~\tau(e^\bot)\leq t\}.$$
\end{definition}
We denote simply
by $\mu(x)$ the function $t\rightarrow\mu_t(x)$.
The generalized singular number function $t\rightarrow\mu_t(x)$ is decreasing right-continuous.
For convenience to discuss the properties of $\mu_t(x)$ we define $\mu_t^\ell(x)$ by
\[\mu_t^\ell(x)=\inf\{\|xe\|: e~\mbox{is a projection in}~
\mathcal{M}~\mbox{with}~\tau(e^\bot)< t\}.\]
Then  $t\rightarrow\mu_t^\ell(x)$ is non-increasing and right-continuous
and $\mu_t^\ell(x)=\mu_t(x)$ holds for almost every $t\in [0, 1]$.
See \cite{DPS2019, FK1986, O19701, O1970} for basic properties and detailed information
on $\mu_t(x)$ and $\mu_t^\ell(x)$.

If $x$ is self-adjoint and
$x=\int_{-\infty}^\infty tde_t(x)\in \mathcal{M}$ is the
spectral resolution of $x$ then
for any Borel subset $B\subseteq (-\infty, \infty)$ we denote by $e_B(x)$ the
corresponding spectral projection. However, we write $e_s(x)=e_{(-\infty, s]}(x).$
Given $x\in \mathcal{M}^{sa}$,
the spectral scale $\lambda_t(x)$
on $(0, \tau(\mathbb{\mathbb{I}}))$ is defined by
\[\lambda_t(x)=\inf\{s\in \mathbb{R}: \tau(\mathbb{I}-e_s(x))\leq t\}.\]
Obviously, if $0\leq x\in \mathcal{M}$
then $\lambda_t(x) = \mu_t(x)$ for $0<t<1.$
The spectral scale $\lambda_t(x)$
is non-increasing and right-continuous.
For the properties of $\lambda_t(\cdot)$,
it is important to note that $\lambda_t(x+a \mathbb{I})=\lambda_t(x)+a$
for every $x\in  \mathcal{M}^{sa}$ and $a\in \mathbb{R}$.
This property enables us to deduce estimations for $\lambda_t(x)$ from formulas on $\mu_t(x)$.

 To achieve our main results, we state some properties of $\lambda(\cdot)$ and $\mu(\cdot)$ without proof(see \cite{H1987, FK1986}).
\begin{proposition}\label{proposition 2.2}(see \cite{H1987, FK1986})
Let $x, y\in  \mathcal{M}$ and $v\in\mathcal{M}$. Then
\begin{enumerate}
\item $\mu(|x|)=\mu(x)=\mu(x^*)$ and $\mu(\alpha x)=|\alpha|\mu_t(x),$ for $t>0$ and $\alpha\in \mathbb{C}$.
\item Let $f$ be a bounded continuous
 increasing function on $[0, \infty)$ with $f(0)=0$.
Then $\mu(f(x))=f(\mu(x))$ and $\tau(f(x))=\int_0^{\tau(1)} f(\mu_t(x))dt.$
\item $\mu_{s+t}(x+y)\leq\mu_t(x)+\mu_s(y), s, t>0.$
\item If $0\leq x\leq y$, then $\mu_t(x)\leq\mu_t(y)$.
\item $\mu_{t+s}(xy)\leq\mu_t(x)\mu_s(y), s, t>0.$
\item If $x, y$ are self-adjoint, then
$\lambda_{t+s}(x+y)\leq \lambda_t(x)+\lambda_s(y), t, s\geq0, t+s\leq1.$
\item If $0\leq t\leq1$ and $x, y$ are self-adjoint, then $\lambda_t(x)\geq0$ implies that
$\lambda_t(v^*av)\leq\|v\|^2\lambda_t(x)$.
\item If $x, y$ are self-adjoint and $x\leq y$, then $\lambda_t(x)\leq \lambda_t(y)$.
\item If $x$ is self-adjoint, then $\lambda_t(f(x))=f(\lambda_t(x)), t\in (0, \tau(\mathbb{I}))$,
for every increasing continuous function $f$ on $\mathbb{R}$.
\end{enumerate}
\end{proposition}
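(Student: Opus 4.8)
The plan is to deduce all nine items directly from the definition of the generalised singular number together with the Borel functional calculus, following the scheme of Fack--Kosaki and Hiai. The starting point is that $\mu_t(x)$ depends on $x$ only through $|x|$: for any projection $e$ one has $\|xe\|^2=\|ex^*xe\|=\||x|e\|^2$, so $\mu(x)=\mu(|x|)$, while $\mu(x)=\mu(x^*)$ follows from the polar decomposition $x=u|x|$ (the partial isometry $u$ does not change the relevant norms), and $\mu(\alpha x)=|\alpha|\,\mu(x)$ is read off from $\|\alpha xe\|=|\alpha|\,\|xe\|$. This settles (1). For (2) I would pass to the distribution function $d_s(x)=\tau(e_{(s,\infty)}(|x|))$ and use the elementary fact that $\mu(x)$ is the right-continuous generalised inverse of $s\mapsto d_s(x)$. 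For an increasing continuous $f$ with $f(0)=0$ the spectral projections satisfy $e_{(f(s),\infty)}(f(|x|))=e_{(s,\infty)}(|x|)$, so the distribution functions of $f(|x|)$ and $|x|$ differ only by the reparametrisation $s\mapsto f(s)$; inverting yields $\mu(f(x))=f(\mu(x))$, and the trace formula $\tau(f(x))=\int_0^{\tau(\mathbb{I})}f(\mu_t(x))\,dt$ then follows from the layer-cake representation of $\tau$ and Fubini's theorem.

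Items (3)--(5) are the subadditivity, monotonicity and submultiplicativity of $\mu$. Monotonicity (4) is immediate, since $0\le x\le y$ forces $\|xe\|\le\|ye\|$ for every projection $e$. For (3) and (5) the idea is to take projections $e,f$ that are near-optimal for $x$ at level $t$ and for $y$ at level $s$, and then to work on the meet $e\wedge f$, whose complement obeys $\tau((e\wedge f)^\bot)\le\tau(e^\bot)+\tau(f^\bot)\le t+s$; estimating $\|(x+y)(e\wedge f)\|$ and $\|xy(e\wedge f)\|$ on this common projection gives the two inequalities after passing to the infimum. I expect the bookkeeping in (3) and (5)---in particular the passage from optimal projections to their meet and the control of the co-trace---to be the main technical point, even though it is entirely standard in this theory.

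The spectral-scale statements reduce to their $\mu$-counterparts by the translation identity $\lambda_t(x+a\mathbb{I})=\lambda_t(x)+a$ already recorded in the text. Given self-adjoint $x,y$, choose $a$ so large that $x+a\mathbb{I}$ and $y+a\mathbb{I}$ are positive; then $\lambda_t=\mu_t$ applies to them, and (6) and (8) drop out of (3) and (4) respectively after cancelling the shift. Property (9) is the spectral-mapping identity for $\lambda_t$, proved exactly as the corresponding part of (2) but now over all of $\mathbb{R}$, since positivity plays no role once one works with $\lambda_t$ directly via $\lambda_t(x)=\inf\{s:\tau(e_{(s,\infty)}(x))\le t\}$ and the relation $e_{(f(s),\infty)}(f(x))=e_{(s,\infty)}(x)$. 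Finally, for (7)---which I read as $\lambda_t(v^*xv)\le\|v\|^2\lambda_t(x)$ under the hypothesis $\lambda_t(x)\ge0$---I would first replace $x$ by its positive part: since $x\le x_+$ we get $\lambda_t(v^*xv)\le\lambda_t(v^*x_+v)$ by (8), while $\lambda_t(x_+)=\lambda_t(x)$ whenever $\lambda_t(x)\ge0$ because $x$ and $x_+$ share the same spectral projections above nonnegative levels. Writing $v^*x_+v=|x_+^{1/2}v|^2$ and invoking the special case $\mu_t(ab)\le\mu_t(a)\,\|b\|$ of (5), one obtains $\lambda_t(v^*x_+v)=\mu_t(x_+^{1/2}v)^2\le\mu_t(x_+^{1/2})^2\,\|v\|^2=\|v\|^2\mu_t(x_+)=\|v\|^2\lambda_t(x)$, which is the asserted bound.
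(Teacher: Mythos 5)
The paper does not actually prove this proposition: it is stated explicitly ``without proof'' with references to Hiai and Fack--Kosaki, so your attempt can only be measured against the standard arguments in those sources. Most of your outline matches them: the reduction $\mu(x)=\mu(|x|)$ via $\|xe\|^2=\|e|x|^2e\|$, the distribution-function argument for (2) and (9), the meet-of-projections argument for (3), the shift $x\mapsto x+a\mathbb{I}$ to transfer (3) and (4) to (6) and (8), and the factorization $v^*x_+v=|x_+^{1/2}v|^2$ combined with $\mu_t(ab)\le\mu_t(a)\|b\|$ (obtained from $\mu(ab)=\mu(b^*a^*)$) for (7) are all correct and are essentially the proofs in \cite{FK1986, H1987}.

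There is, however, a genuine gap in your treatment of (5). You propose to take near-optimal projections $e$ for $x$ at level $t$ and $f$ for $y$ at level $s$ and to estimate $\|xy(e\wedge f)\|$. But $e\wedge f\le f$ only gives $xy(e\wedge f)=x\,(yf)(e\wedge f)$, and there is no reason for $y(e\wedge f)$ to have range inside the range of $e$; without that you cannot insert $e$ between $x$ and $y$, and the best you get is $\|xy(e\wedge f)\|\le\|x\|\,\|yf\|$, i.e.\ $\mu_{t+s}(xy)\le\|x\|\mu_s(y)$, which is strictly weaker than the claim. The correct argument (Fack--Kosaki, Lemma 2.5) replaces $e\wedge f$ by the projection $g$ onto $f\mathcal{H}\cap (yf)^{-1}(e\mathcal{H})$: then $yg=eyg$, so $\|xyg\|\le\|xe\|\,\|yg\|\le\|xe\|\,\|yf\|$, and the co-trace is controlled by noting that $yf$ restricted to the complement of the preimage injects into $(\mathbb{I}-e)\mathcal{H}$, whence $\tau(\mathbb{I}-g)\le\tau(\mathbb{I}-f)+\tau(\mathbb{I}-e)\le s+t$ by comparison of equivalent projections. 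You should also tighten two smaller points: the identity $e_{(f(s),\infty)}(f(|x|))=e_{(s,\infty)}(|x|)$ in (2) and (9) requires $f$ to be injective, so a limiting or monotone-class argument is needed if ``increasing'' is read as merely non-decreasing; and ``the partial isometry does not change the relevant norms'' in (1) should be made precise via $|x^*|=u|x|u^*$ and the correspondence $e\mapsto u e u^*$ between projections with equal co-trace.
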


\begin{example}
 Let $\mathcal{H}=\mathbb{C}^n$ and
 let $\mathcal{M}=\mathcal{B}(\mathcal{H})\cong \mathbb{M}_n(\mathbb{C})$
equipped with the normalized trace  $\tau_n:\triangleq \frac{1}{n}tr_n$ where
$tr_n$ is the standard trace on $\mathbb{M}_n(\mathbb{C})$.
  If $x\in \mathcal{B}(\mathcal{H})=\mathbb{M}_n(\mathbb{C})$ is self-adjoint,
then
$x$ can be written as $x=\sum_{i=1}^n\alpha_j p_j$,
where $\alpha_1\geq\alpha_2\geq\cdots\geq\alpha_n$ is the sequence
of eigenvalues of $x$ in which each is repeated according to its multiplicity
and $\sum_{i=1}^n p_j=\mathbb{I}$.
Therefore,
\[\lambda_t(x)=\sum_{j=1}^n\alpha_j\chi_{[\frac{j-1}{n}, \frac{j}{n})}, t\in [0, 1).\]
If $x\geq0$, then $\alpha_1\geq\alpha_2\geq\cdots\geq\alpha_n\geq0$,
$\lambda_t(x)=\mu_t(x)$
and $\mu_t^\ell(x)=\sum_{j=1}^n\alpha_j\chi_{(\frac{j-1}{n}, \frac{j}{n}]}$.

If $x\in \mathbb{M}_n(\mathbb{C})$ is arbitrary, then $\mu_t(x)=\mu_t(|x|)$ and the eigenvalues
of $|x|$ are usual called the singular values of $x.$
It follows that
\[\mu_t(x)=\sum_{j=1}^ns_j\chi_{[\frac{j-1}{n}, \frac{j}{n})}\]
and
\[\mu_t^\ell(x)=\sum_{j=1}^ns_j\chi_{(\frac{j-1}{n}, \frac{j}{n}]},\]
where $s_1\geq s_2\geq\cdots\geq s_n\geq0$ is the sequence of singular
values of $x$, repeated according to multiplicity. It is clear that
$\mu_{\frac{j-1}{n}}(x)=\mu_{\frac{j}{n}}^\ell(x)$.

Note that if $x\in \mathbb{M}_n(\mathbb{C})$ is self-adjoint, then
$x$ can also be  written as $x=\sum_{i=1}^m\beta_j p_j$, where
$\beta_1>\beta_2>\cdots>\beta_m(m\leq n)$.
Then
\[\lambda_t(x)=\sum_{j=1}^m\beta_j\chi_{[d_{j-1}, d_j)},\]
where $d_{j}=\sum_{i=1}^j\tau(p_i)$ for $j=1, 2, \cdots, m$
and $d_0=0.$ For each $j$, the length of the interval
$[nd_{j-1}, nd_j)$ is $n\tau_n(p_j)$, which is the dimension
of the eigenspace corresponding to $\beta_j.$
See \cite{H1997, DPS2019} for more details of
$\mu_t(\cdot)$ and $\lambda_t(\cdot)$ of operators and matrices
(Note:
the generalized singular values $\mu_{k}$, as defined in \cite{H1997},
is denoted by $\mu_{\frac{k}{n}}^\ell$, in this paper;
the generalized singular values $\mu_{\frac{k}{n}}$ and $\mu_{\frac{k}{n}}^\ell$
is noting but $\mu_{k+1}$ and $\mu_k$, respectively, in \cite{H1997})
\end{example}

\begin{example}
Consider the algebra $\mathcal{M}=L^\infty([0, 1])$ of all Lebesgue measurable essentially bounded
functions on $[0, 1]$. Algebra $\mathcal{M}$ can be seen as an abelian von Neumann algebra acting
via multiplication on the Hilbert space $\mathcal{H}=L^2([0, 1])$, with the trace given by integration
with respect to Lebesgue measure $m$.
For a real measurable function $f\in L^\infty([0, 1])$, the
decreasing rearrangement $f^*$ of the function $f$ is given by
\[f^*(t)=\inf\{s\in \mathbb{R}: m(\{h\in[0,1]: f(h)>s\})\leq t\}, 0<t<1.\]
Then $\mu_t(f)=|f|^*(t)$ and $\lambda_t(f)=f^*(t)$.
Suppose that $f=\sum_1^n\alpha_i \chi_{B_i}$, where $B_i\subseteq[0, 1]$ with $B_i\cap B_j=\emptyset$
whenever $i\neq j$, and $0<\alpha_j\in \mathbb{R} (j = 1,2,\cdots, n)$ are such that $\alpha_i\neq \alpha_j$
whenever $i\neq j$. For the computation of $\mu_t(f)$, it may be assumed that
$\alpha_1>\alpha_2>\cdots>\alpha_n$. Then
\[\lambda_t(f)=\sum_{j=1}^n\alpha_j\chi_{[d_{j-1}, d_j)},\]
where $d_{j}=\sum_{i=1}^jm(B_i)$ for $j=1, 2, \cdots, n$
and $d_0=0.$
If $f\geq0$, then $\alpha_1>\alpha_2>\cdots>\alpha_n\geq0$,
$\lambda_t(f)=\mu_t(f)$
and $\mu_t^\ell(f)=\sum_{j=1}^n\alpha_j\chi_{(d_{j-1}, d_j]}$.
See \cite{DPS2019, N1987} for more details.
\end{example}

\subsection{Fuglede-\,-Kadison determinant}
Let $\mathcal{M}$  be a finite von
Neumann algebra acting on a separable Hilbert space $\mathcal{H}$,
with a normal faithful finite tracial state $\tau$.
Recall that the Fuglede-\,-Kadison determinant
$\Delta=\Delta_\tau:\mathcal{M}\rightarrow \mathbb{R}^+$ is defined by
$\Delta_\tau(x)=\tau(\log|x|)$ if $|x|$ is invertible;
and otherwise, we define $\Delta_\tau(x)=\inf\Delta_\tau(|x| + \varepsilon \mathbb{I})$,
the infimum takes over all scalars $\varepsilon>0$.
We define Fuglede-\,-Kadison determinant-like function of $x$
by \[\Lambda_t(x)=\exp\{\int_0^t\log\mu_s(x)ds\}, t>0.\]
Since $\tau(\mathbb{I})=1$,  if $|x|$ is invertible, then
\[\Delta_\tau(x)=\Lambda_1(x)=\exp\{\int_0^1\log\mu_s(x)ds\}.\]
We understanding that $\Delta(x)=0$ if
\[\int_0^{\tau(\mathbb{I})}\log\mu_s(x)ds=-\infty.\]
Recall that $x$ is said to be logarithmically
submajorised by $y$(see \cite{DDSZ2020, HSZ2020}), denoted by $x\prec\prec_{\log}y$
(or $\mu(x)\prec\prec_{\log}\mu(y)$), if $\Lambda_t(x)\leq\Lambda_t(y)$ for all $t>0$.

We state for easy reference the following fact,
obtained from \cite{A1967, B1983} for Fuglede-Kadison determinant  which will be applied below.
\begin{proposition}\label{proposition 2.4}
Let $x, y\in\mathcal{M}$. Then
\begin{enumerate}
\item
$\Delta_\tau(\mathbb{I})=1, \Delta_\tau(xy)=\Delta_\tau(x)\Delta_\tau(y),$
\item $\Delta_\tau(x)=\Delta_\tau(x^*)=\Delta_\tau(|x|),~\Delta_\tau(|x|^\alpha)
=(\Delta_\tau(|x|))^\alpha, \alpha\in \mathbb{R}^+$
\item
$\Delta_\tau(x^{-1})=(\Delta_\tau(x))^{-1}, \mbox{if}~ x~\mbox{is invertible in}~\mathcal{M}$
\item $\Delta_\tau(x)\leq\Delta_\tau(y), \mbox{if}~ 0\leq x\leq y$
\item $\lim_{\varepsilon\rightarrow0^+}
\Delta_\tau(x+\varepsilon1)=\Delta_\tau(x), \mbox{if}~   0\leq x.$
\item $\Delta_\tau(x)\leq\Delta_\tau(y), \mbox{if}~  x\prec\prec_{\log} y$.
\end{enumerate}
\end{proposition}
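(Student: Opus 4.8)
The plan is to make the integral representation
\[
\Delta_\tau(x)=\exp\Big\{\int_0^1\log\mu_s(x)\,ds\Big\}
\]
the central tool and to reduce items (2)--(6) to it, isolating the multiplicativity in (1) as the genuinely hard assertion. First I would promote this representation, stated in the excerpt only for invertible $|x|$, to arbitrary $x\in\mathcal{M}$, which simultaneously yields item (5). For $x\ge0$ one has $\mu_s(x+\varepsilon\mathbb{I})=\mu_s(x)+\varepsilon$, because $\mu_s=\lambda_s$ on positive operators and $\lambda_s(x+\varepsilon\mathbb{I})=\lambda_s(x)+\varepsilon$; hence, $x+\varepsilon\mathbb{I}$ being invertible, $\Delta_\tau(x+\varepsilon\mathbb{I})=\exp\{\int_0^1\log(\mu_s(x)+\varepsilon)\,ds\}$. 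As $\varepsilon\downarrow0$ the integrands decrease monotonically to $\log\mu_s(x)$, so by monotone convergence $\Delta_\tau(x+\varepsilon\mathbb{I})$ decreases to $\exp\{\int_0^1\log\mu_s(x)\,ds\}$ (read as $0$ when the integral is $-\infty$). Since the definition sets $\Delta_\tau(x)=\inf_\varepsilon\Delta_\tau(|x|+\varepsilon\mathbb{I})$ for non-invertible $x$, applying this to $|x|$ both establishes the integral formula for every $x$ and proves the continuity assertion $\lim_{\varepsilon\to0^+}\Delta_\tau(x+\varepsilon\mathbb{I})=\Delta_\tau(x)$ of item (5).

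With the representation in hand, items (2), (4) and (6) are immediate. For (2) I would invoke Proposition~\ref{proposition 2.2}(1), namely $\mu(x)=\mu(x^*)=\mu(|x|)$, so the three determinants share the same integrand over $[0,1]$ and therefore coincide; the relation $\mu(|x|^\alpha)=\mu(|x|)^\alpha$ from Proposition~\ref{proposition 2.2}(2) (take $f(t)=t^\alpha$) pulls the exponent $\alpha$ outside the integral and gives $\Delta_\tau(|x|^\alpha)=(\Delta_\tau(|x|))^\alpha$. For (4), the hypothesis $0\le x\le y$ forces $\mu_t(x)\le\mu_t(y)$ by Proposition~\ref{proposition 2.2}(4), and since $\log$ is increasing the two integrals, hence the two determinants, are ordered. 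Item (6) is simply the defining inequality $\Lambda_t(x)\le\Lambda_t(y)$ of $x\prec\prec_{\log}y$ evaluated at $t=1$, using $\Lambda_1=\Delta_\tau$.

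The main obstacle is the multiplicativity $\Delta_\tau(xy)=\Delta_\tau(x)\Delta_\tau(y)$ in (1); the normalisation $\Delta_\tau(\mathbb{I})=1$ is trivial since $\log\mathbb{I}=0$, and (3) follows formally once multiplicativity is known, via $\Delta_\tau(x)\Delta_\tau(x^{-1})=\Delta_\tau(xx^{-1})=\Delta_\tau(\mathbb{I})=1$. Multiplicativity is the original theorem of Fuglede and Kadison, and I would first reduce to invertible $x,y$ by the approximation of Step~1 together with continuity, and then to the additivity statement $\tau(\log|xy|)=\tau(\log|x|)+\tau(\log|y|)$. Because $|xy|\ne|x|\,|y|$ in general this is not formal; the heart of the matter is an analyticity and deformation argument combined with the fact that $\tau$ annihilates commutators, which is what controls the Baker--Campbell--Hausdorff corrections arising when one compares the relevant logarithms. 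Making this rigorous across the necessary branch choices of the logarithm is precisely the delicate point, and it is the reason I would ultimately appeal to \cite{A1967, B1983} for the complete argument rather than reconstruct it from the elementary properties of $\mu$ alone.
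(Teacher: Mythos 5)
Your proposal is correct, and it is worth noting that the paper itself gives no proof of this proposition at all: it is stated ``for easy reference'' with a citation to \cite{A1967, B1983}. Your write-up is therefore strictly more detailed than the source. The reductions you carry out are all sound: extending the integral representation $\Delta_\tau(x)=\exp\{\int_0^1\log\mu_s(x)\,ds\}$ to non-invertible $x$ via $\mu_s(|x|+\varepsilon\mathbb{I})=\mu_s(|x|)+\varepsilon$ and monotone convergence simultaneously yields item (5); items (2), (4), (6) then follow from Proposition~\ref{proposition 2.2}(1),(2),(4) and the definition of $\prec\prec_{\log}$ at $t=1$; and (3) is formal once (1) is known. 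Two small caveats. First, Proposition~\ref{proposition 2.2}(2) as stated requires $f$ bounded, and $f(t)=t^{\alpha}$ is not; this is harmless since only the values of $f$ on $[0,\|x\|]$ matter, but it deserves a word. Second, your reduction of multiplicativity to the invertible case ``by the approximation of Step~1 together with continuity'' is thinner than it looks: item (5) gives continuity only along the path $|x|+\varepsilon\mathbb{I}$, and relating $\Delta_\tau(xy)$ to $\Delta_\tau\bigl((|x|+\varepsilon\mathbb{I})(|y^*|+\varepsilon\mathbb{I})\bigr)$ requires the polar decompositions and the submultiplicativity $\mu_{t+s}(xy)\le\mu_t(x)\mu_s(y)$ (compare the argument the paper itself uses in Remark~\ref{remark 4.2}(1)). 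Since you, like the paper, ultimately delegate the genuinely hard additivity $\tau(\log|xy|)=\tau(\log|x|)+\tau(\log|y|)$ to \cite{A1967, B1983}, this does not affect the verdict, but the approximation step is not purely cosmetic.
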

  See \cite{A1967, B1983,  BL2008} for basic properties and detailed information
on Fuglede-Kadison determinant of $x\in\mathcal{M}$.

\begin{example}
 Let $\mathcal{H}=\mathbb{C}^n$ and let $\mathcal{M}=\mathcal{B}(\mathcal{H})\cong \mathbb{M}_n(\mathbb{C})$
equipped with the normalized trace  $\tau_n:\triangleq \frac{1}{n}tr_n$ where
$tr_n$ is the standard trace on $\mathbb{M}_n(\mathbb{C})$.
  If $x\in \mathcal{B}(\mathcal{H})$, then
$\Delta_{\tau_n}(x)=(\det(|x|))^{\frac{1}{n}}$.
See \cite{H1997} for more information
on determinant of matrices.
\end{example}

If $x, y\in\mathcal{M}$ and $0<p<\infty$,
then $x$ is said to be $p$-submajorised by $y$,
denoted by $x\prec\prec_p y$, if
$\int_0^t\mu_s(x)^pds\leq \int_0^t\mu_s(y)^pds$
for all $t>0.$
\begin{remark}\label{rk:determanint 2}
Let $x, y\in\mathcal{M}^+$ be invertible.
Then the following conditions are equivalent:
\begin{enumerate}
\item $\mathbb{I}+rx\prec\prec_{\log} \mathbb{I}+ry,$ for all $ r\in \mathbb{R}^+$;
\item $x\prec\prec_{p} y,~~0<p<1$;
\item $x\prec\prec_{\log} y$;
\item $\int_0^t\varphi(\mu_s(x))ds\leq\int_0^t\varphi(\mu_s(y))ds$ for all $t>0$ and all nondecreasing functions $\varphi$
on $[0, \infty)$ such that $\varphi(0)=0$ and $t\rightarrow \varphi(e^t)$ is convex.
\end{enumerate}
Indeed,
let $\psi$ is a bounded positive measurable function on $[0, \infty)$ and $\pi_t(r)=\exp\{\int_0^t\log(1+r\psi(s))ds\}$.
By \cite[Lemma 3.2]{F1983}, we have
\[
\int_0^t\psi(s)^pds=\frac{p\sin(\pi p)}{\pi}\int_0^\infty \frac{\log\pi_t(r)}{r^{p+1}}dr,
\]
which implies that (1)$\Rightarrow$(2) holds.

Note that if $(\int_0^{t}|\varphi(s)|^p\frac{ds}{t})^{\frac{1}{p}}<\infty,t>0$ for some $p>0$,
then from \cite[p.71]{R1974} we obtain
\[\exp\{\int_0^{t}\log|\varphi(s)|\frac{ds}{t}\}=\lim_{p\rightarrow0}
(\int_0^{t}|\varphi(s)|^p\frac{ds}{t})^{\frac{1}{p}}, t>0,\]
which yields (2)$\Rightarrow $(3). (3)$\Rightarrow$(4) follows from the fact that $t\rightarrow \varphi(e^t)$ is convex and
$\varphi(e^{\log\mu(x)})=\varphi(\mu(x))$(see \cite[p.22, Theorem D.2]{MO1979}).
 It is easy to check that (4)$\Rightarrow$(1).
\end{remark}

\section{Unitary approximation and Logarithmic submajorisation}

Our starting point is the following inequality for complex numbers:
\begin{equation}\label{ineq 3.1}
||z|-1|\leq||z|-v|\leq||z|+1|, z, v\in \mathbb{C}~\mbox{with}~|v|=1.
\end{equation}
In this section, we will consider some Logarithmic submajorisation
inequalities for operator version of (\ref{ineq 3.1}).
We start with a lemma which will be used in our proof.

\begin{lemma}\label{lemma 0000}
Let $x\in \mathcal{M}^+$. Then
\[\lambda_s(-x)=-\mu_{1-s}^\ell(x), 0<s<1.\]
\end{lemma}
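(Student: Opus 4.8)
The plan is to reduce the identity to a statement purely about the distribution function of $x$ via the spectral theorem, and then to match the two left-continuous objects appearing on the two sides. First I would compute $\lambda_s(-x)$ directly from its definition. Writing $E^x$ for the spectral measure of $x$, the spectral measure of $-x$ satisfies $e_{(-\infty, r]}(-x) = E^x([-r,\infty)) = e_{[-r,\infty)}(x)$, so that $\mathbb{I} - e_r(-x) = e_{(-\infty,-r)}(x)$. Since $\tau(\mathbb{I})=1$ and $x \geq 0$, this gives $\tau(\mathbb{I}-e_r(-x)) = 1 - \tau(e_{[-r,\infty)}(x))$. Setting $g(u) := \tau(e_{[u,\infty)}(x))$ and substituting $u=-r$ in the definition of $\lambda_s$, I obtain
\[
\lambda_s(-x) = \inf\{r : g(-r) \ge 1-s\} = -\sup\{u : g(u)\ge 1-s\}.
\]
Thus, with $t := 1-s \in (0,1)$, it suffices to prove that $\sup\{u : g(u)\ge t\} = \mu_t^\ell(x)$.

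Next I would set up the two elementary facts relating $g$ to the singular value function. Writing $d(v):=\tau(e_{(v,\infty)}(x))$, the equality $\lambda_r(x)=\mu_r(x)$ for $x\ge 0$ gives $\mu_r(x)=\inf\{v\ge 0 : d(v)\le r\}$; and directly from the definition of $\mu^\ell$, since $\{e : \tau(e^\perp)<t\}=\bigcup_{r<t}\{e:\tau(e^\perp)\le r\}$, one has $\mu_t^\ell(x)=\inf_{0<r<t}\mu_r(x)$. I would also record the obvious monotonicity relations $d(v)\ge g(u)$ whenever $v<u$ (because $(v,\infty)\supseteq[u,\infty)$) and $g(u)\ge d(u)$. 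These are the only ingredients needed; no equimeasurability statement is required.

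Then I would prove the desired equality $\sup\{u:g(u)\ge t\}=\mu_t^\ell(x)$ by two inequalities. For $\le$: if $g(u)\ge t$, then for every $r<t$ and every $v<u$ we have $d(v)\ge g(u)\ge t>r$, whence $u\le\inf\{v:d(v)\le r\}=\mu_r(x)$; taking the infimum over $r<t$ gives $u\le\mu_t^\ell(x)$, and then the supremum over all admissible $u$ yields $\sup\{u:g(u)\ge t\}\le\mu_t^\ell(x)$. For $\ge$: if $u<\mu_t^\ell(x)=\inf_{r<t}\mu_r(x)$, then $u<\mu_r(x)$ for every $r<t$, which forces $d(u)>r$ (otherwise $u$ would lie in $\{v:d(v)\le r\}$ and $\mu_r(x)\le u$); letting $r\uparrow t$ yields $d(u)\ge t$, hence $g(u)\ge d(u)\ge t$ and $u\le\sup\{u':g(u')\ge t\}$. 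Combining the two bounds completes the proof.

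The main obstacle I anticipate is the bookkeeping of endpoints: the right-hand side involves the left-continuous function $\mu^\ell$ rather than $\mu$, and this is exactly dictated by possible atoms in the spectrum of $x$ (equivalently, jump discontinuities of $d$). The careful distinction between the open interval $(v,\infty)$ entering $d$ and the closed interval $[u,\infty)$ entering $g$, together with the strict-versus-nonstrict inequalities $\tau(e^\perp)<t$ versus $\tau(e^\perp)\le t$, is precisely what makes $\mu_t^\ell$ (and not $\mu_t$) appear on the right, and it is where the argument must be handled with care.
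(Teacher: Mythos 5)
Your proof is correct, but it proceeds by a genuinely different route from the one in the paper. The paper first verifies the identity by hand for step operators $x=\sum_i\alpha_ip_i$, writing out $\mu^\ell_s(x)$ and $\lambda_s(-x)$ explicitly as step functions, and then treats a general $0\le x$ by a monotone approximation $x_n=f_n(x)\downarrow x$ with $\|x-x_n\|\le\|x\|/2^n$, passing to the limit via the subadditivity inequalities $\mu^\ell_s(x_n)\le\mu^\ell_{s-\epsilon}(x)+\|x_n-x\|$ and $\lambda_{s+\epsilon}(-x)-\|x_n-x\|\le\lambda_s(-x_n)$ together with the one-sided continuity of $\mu^\ell$ and $\lambda$. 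You instead avoid any approximation: you express both sides as generalized inverses of the two distribution functions $g(u)=\tau(e_{[u,\infty)}(x))$ and $d(v)=\tau(e_{(v,\infty)}(x))$, using $e_r(-x)=e_{[-r,\infty)}(x)$ to get $\lambda_s(-x)=-\sup\{u:g(u)\ge 1-s\}$ and $\mu^\ell_t(x)=\inf_{r<t}\mu_r(x)$ with $\mu_r(x)=\inf\{v\ge0:d(v)\le r\}$, and then match the two by elementary monotonicity ($d(v)\ge g(u)$ for $v<u$, $g(u)\ge d(u)$). Your computation of the two bounds is sound (note that $g(0)=1\ge t$ guarantees the supremum is nonnegative, which disposes of the degenerate case $\mu^\ell_t(x)=0$), and it isolates exactly why the left-continuous $\mu^\ell$ rather than $\mu$ appears: the closed-versus-open interval distinction at atoms of the spectrum. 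What your approach buys is a single uniform argument valid for all $x\in\mathcal{M}^+$ with no limit interchange and an exact treatment of endpoints; what the paper's approach buys is that the same approximation scheme is reused verbatim in its Lemma 3.7 and Lemma \ref{lemma3.6}, so the authors get those statements almost for free once the machinery is set up.
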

\begin{proof}
Let $x=\sum_{i=1}^n\alpha_i p_i$ with $\alpha_1>\alpha_2>\cdots>\alpha_n\geq0$ and
$ p_ip_j=0, i\neq j$.
Without loss of generality we can assume
$\sum_{i=1}^n p_i= \mathbb{I}$. Indeed, if
$\sum_{i=1}^n p_i\neq\mathbb{I}$ we write
$p_{n+1}=\mathbb{I}-\sum_{i=1}^n p_i$. Replacing $p_n$ by $p_n+p_{n+1}$ in the equation
$x=\sum_{i=1}^n\alpha_i p_i$ if
$\alpha_n=0$, and replacing $x=\sum_{i=1}^n\alpha_i p_i$ by $x=\sum_{i=1}^{n+1}\alpha_i p_i$ if
$\alpha_n\neq0$.
Set $d_i=\sum_{j=1}^i\tau(p_j), 1\leq i\leq n$ and $d_0=0$.
Then $d_n=\sum_{j=1}^n\tau(p_j)=\tau(\mathbb{I})=1$,
\[\mu_s^\ell(x)=\sum_{i=1}^n\alpha_i\chi_{(d_{i-1}, d_i]}(s), 0<s<1,\]
and
\[\lambda_s(-x)=\sum_{j=1}^n-\alpha_{n-j+1}\chi_{[1-d_j, 1-d_{j-1})}, 0<s<1.\]
Thus,
\[\mu_{1-s}^\ell(x)=-\lambda_s(-x), 0<s<1,\]
For the general case, let $0\leq x\in \mathcal{M}$ and let
$x=\int_0^{\|x\|} \lambda de_\lambda(x)$ be the spectral decomposition of $x$.
Put \[f_k(t)=\sum_{j=1}^{2^n} \frac{j\|x\|}{2^n}
\chi_{[\frac{(j-1)\|x\|}{2^n},  \frac{j\|x\|}{2^n})}.\]
Write
$x_n=f_n(|x|)$.
It follows that $\|x-x_n\|\leq\frac{\|x\|}{2^n}$ and $x_n\geq x_{n+1}\geq x$.
The proof is completed by showing that
\[\lim_{n\rightarrow\infty}\mu^\ell_s(x_n)=\mu_{s}^\ell(x)~
 \mbox{and}~
\lim_{n\rightarrow\infty}\lambda_s(-x_n)=\lambda_{s}(-x).\]
For $\epsilon>0,$ we obtain
\[\mu^\ell_s(x_n)\leq\mu_{s-\epsilon}^\ell(x)+\mu_\epsilon^\ell(x_n-x)
\leq\mu_{s-\epsilon}^\ell(x)+\|x_n-x\|, 0<s<1\]
and
\[\lambda_{s+\epsilon}(-x)-\|x_n-x\|\leq
\lambda_{s+\epsilon}(-x)-\lambda_\epsilon(x_n-x)\leq\lambda_{s}(-x_n), 0<s<1.
\]
Taking the $n\rightarrow\infty$ of the both side, we get
\[\limsup_{n\rightarrow\infty}\mu^\ell_s(x_n)\leq\mu_{s-\epsilon}^\ell(x), 0<s<1\]
and
\[\liminf_{n\rightarrow\infty}\lambda_s(-x_n)\geq\lambda_{s+\epsilon}(x), 0<s<1.\]
Since $\lambda(-x)$
is right-continuous and $\mu^\ell(x)$
is left-continuous on (0, 1),
letting $\epsilon\downarrow0$, we obtain
\[\limsup_{n\rightarrow\infty}\mu^\ell_s(x_n)\leq\mu_{s}^\ell(x),~
~\liminf_{n\rightarrow\infty}\lambda_s(-x_n)\geq\lambda_{s}(-x), 0<s<1.\]
On the other hand, since $x\leq x_n$, $-x\geq-x_n$, moreover,
\[\liminf_{n\rightarrow\infty}\mu^\ell_s(x_n)\geq\mu_{s}^\ell(x),~
\limsup_{n\rightarrow\infty}\lambda_s(-x_n)\leq\lambda_s(-x), 0<s<1.\]
Hence $\lim_{n\rightarrow\infty}\mu^\ell_s(x_n)=\mu_{s}^\ell(x),~
\limsup_{n\rightarrow\infty}\lambda_s(-x_n)=\lambda_s(-x), 0<s<1$.
This completes the proof.
\end{proof}

\begin{proposition}\label{lemma 3.1}
Let $x\in \mathcal{M}$. Then
\[-\mu_{1-s}^\ell(x)\leq\lambda_s(Rex)\leq\mu_s(x), ~~-\mu_{1-s}^\ell(x)\leq\lambda_s(Imx)\leq\mu_s(x), 0<s<1.\]
\end{proposition}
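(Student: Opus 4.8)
The plan is to derive both inequalities from a single min--max description of the spectral scale, combined with the trivial numerical-range estimates $\mathrm{Re}\langle x\xi,\xi\rangle\le|\langle x\xi,\xi\rangle|\le\|x\xi\|$ and $\mathrm{Im}\langle x\xi,\xi\rangle\le\|x\xi\|$, valid for every unit vector $\xi$. Writing $\mathrm{Re}\,x=\tfrac12(x+x^*)$ and $\mathrm{Im}\,x=\tfrac1{2i}(x-x^*)$, the key auxiliary fact I would first prove is that for every $a\in\mathcal{M}^{sa}$ and $0<t<1$,
\begin{equation*}
\lambda_t(a)=\inf_{\tau(\mathbb{I}-e)\le t}\ \sup_{\xi\in e\mathcal{H},\,\|\xi\|=1}\langle a\xi,\xi\rangle,\qquad \lambda_t^\ell(a)=\inf_{\tau(\mathbb{I}-e)< t}\ \sup_{\xi\in e\mathcal{H},\,\|\xi\|=1}\langle a\xi,\xi\rangle,
\end{equation*}
where the infima run over projections $e\in\mathcal{M}$ and $\lambda_t^\ell$ denotes the left-continuous regularisation of the spectral scale (defined from strict trace inequalities, exactly as $\mu_t^\ell$ arises from $\mu_t$). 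Since $\sup_{\xi\in e\mathcal{H},\|\xi\|=1}\|x\xi\|=\|xe\|$, these formulas are tailored to match the very definitions of $\mu_t$ and $\mu_t^\ell$.

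To prove the first formula, for the bound ``$\le$'' I would feed in the spectral projection $e=e_{(-\infty,s]}(a)$ for any $s>\lambda_t(a)$: by definition of $\lambda_t$ this $e$ satisfies $\tau(\mathbb{I}-e)\le t$, while $\langle a\xi,\xi\rangle\le s$ for $\xi\in e\mathcal{H}$, and letting $s\downarrow\lambda_t(a)$ gives the estimate. For ``$\ge$'', given any projection $e$ with $\tau(\mathbb{I}-e)\le t$ and any $s<\lambda_t(a)$, the spectral projection $f=e_{(s,\infty)}(a)$ has $\tau(f)>t\ge\tau(\mathbb{I}-e)$, so $\tau(e\wedge f)\ge\tau(e)+\tau(f)-1>0$; hence $e\wedge f\ne0$ yields a unit vector $\xi\in e\mathcal{H}\cap f\mathcal{H}$ with $\langle a\xi,\xi\rangle\ge s$. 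The left-continuous version is obtained verbatim with the strict inequality $\tau(\mathbb{I}-e)<t$. I expect this step---especially the projection-intersection argument $\tau(e\wedge f)\ge\tau(e)+\tau(f)-1$ in a finite von Neumann algebra, together with the careful separation of the $\le t$ and $<t$ cases---to be the main technical point.

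With the formulas in hand the upper bounds are immediate: applying the pointwise estimate $\langle(\mathrm{Re}\,x)\xi,\xi\rangle=\mathrm{Re}\langle x\xi,\xi\rangle\le\|x\xi\|$ inside the supremum gives $\sup_{\xi\in e\mathcal{H}}\langle(\mathrm{Re}\,x)\xi,\xi\rangle\le\|xe\|$ for each $e$, and taking the infimum over $\tau(\mathbb{I}-e)\le t$ (resp. over $\tau(\mathbb{I}-e)<t$) yields $\lambda_t(\mathrm{Re}\,x)\le\mu_t(x)$ (resp. $\lambda_t^\ell(\mathrm{Re}\,x)\le\mu_t^\ell(x)$). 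The same computation with $\mathrm{Im}$ in place of $\mathrm{Re}$ gives $\lambda_t(\mathrm{Im}\,x)\le\mu_t(x)$ and $\lambda_t^\ell(\mathrm{Im}\,x)\le\mu_t^\ell(x)$.

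For the lower bounds I would exploit the reflection identity $\lambda_s(b)=-\lambda_{1-s}^\ell(-b)$ for $b\in\mathcal{M}^{sa}$, which follows from Lemma \ref{lemma 0000} by adding a scalar $c\mathbb{I}$ with $c\ge\|b\|$ to reduce to the positive case and using $\lambda_s(\cdot+c\mathbb{I})=\lambda_s(\cdot)+c$. Taking $b=\mathrm{Re}\,x$ and noting $-\mathrm{Re}\,x=\mathrm{Re}(-x)$, the already-established left-continuous upper bound applied to $-x$ gives $\lambda_{1-s}^\ell(\mathrm{Re}(-x))\le\mu_{1-s}^\ell(-x)=\mu_{1-s}^\ell(x)$, whence $\lambda_s(\mathrm{Re}\,x)=-\lambda_{1-s}^\ell(\mathrm{Re}(-x))\ge-\mu_{1-s}^\ell(x)$; the imaginary part is identical. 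It is precisely this reflection step that forces the appearance of $\mu_{1-s}^\ell$ rather than $\mu_{1-s}$ in the statement, so the careful distinction between the two variational formulas above is exactly what makes the lower bound come out sharp.
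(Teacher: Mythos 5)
Your argument is correct, but it is built on a different engine than the paper's. The paper proves Proposition \ref{lemma 3.1} by the squaring trick $(tx-\tfrac1t\mathbb{I})^*(tx-\tfrac1t\mathbb{I})\ge0$, which gives $\pm 2\mathrm{Re}\,x\le t^2x^*x+\tfrac1{t^2}\mathbb{I}$; it then applies monotonicity of $\lambda_s$ (Proposition \ref{proposition 2.2}(8)), computes $\lambda_s\bigl(t^2x^*x+\tfrac1{t^2}\mathbb{I}\bigr)=t^2\mu_s(x)^2+\tfrac1{t^2}$ and $\lambda_s\bigl(-(t^2x^*x+\tfrac1{t^2}\mathbb{I})\bigr)=-t^2\mu^\ell_{1-s}(x)^2-\tfrac1{t^2}$ via Lemma \ref{lemma 0000}, and optimises over $t$ (handling $\mu_s(x)=0$ separately). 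You instead establish a Courant--Fischer type min--max characterisation of $\lambda_t$ and of its strict-inequality variant $\lambda_t^\ell$, after which the upper bounds drop out of the numerical-range estimate $\mathrm{Re}\langle x\xi,\xi\rangle\le\|x\xi\|\le\|xe\|$ and the lower bounds follow from the reflection identity $\lambda_s(b)=-\lambda^\ell_{1-s}(-b)$, which, like the paper's lower bound, ultimately rests on Lemma \ref{lemma 0000}. Your variational formula is correctly proved: the Kaplansky parallelogram law gives $\tau(e\wedge f)\ge\tau(e)+\tau(f)-1>0$ exactly when $\tau(f)>t\ge\tau(\mathbb{I}-e)$, and faithfulness of $\tau$ yields the required unit vector. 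What your route buys is a reusable noncommutative Ky Fan principle, a one-line derivation of the upper bound, and no degenerate case $\mu_s(x)=0$ to dispose of; what it costs is the overhead of proving the min--max formula in both the $\le t$ and $<t$ versions, plus one small bridge you pass over quickly: converting Lemma \ref{lemma 0000}'s conclusion $\lambda_s(-y)=-\mu^\ell_{1-s}(y)$ into your reflection identity requires knowing $\mu^\ell_{1-s}(y)=\lambda^\ell_{1-s}(y)$ for $y\ge0$ (e.g.\ via $\sup_{\xi\in e\mathcal{H},\|\xi\|=1}\langle y\xi,\xi\rangle=\|y^{1/2}e\|^2$ together with $\mu^\ell_t(y)=\mu^\ell_t(y^{1/2})^2$) and that $\lambda^\ell$ shifts by scalars; both are routine but should be recorded.
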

\begin{proof}
The proof is adapted from \cite[Lemma 2.1]{JL2020}.
For any $t>0,$ we have
\[t^2x^*x+\frac{1}{t^2}\mathbb{I}-(x^*+x)=
(tx-\frac{1}{t}\mathbb{I})^*(tx-\frac{1}{t}\mathbb{I})\geq0.\]
which tell us that
\begin{equation}\label{inequa. lemma 3.1}
t^2x^*x+\frac{1}{t^2}\mathbb{I}\geq 2Rex\geq-(t^2x^*x+\frac{1}{t^2}\mathbb{I}).
\end{equation}
Proposition \ref{proposition 2.2}(8) now yields
\begin{equation}\label{inequ. lemma 3.1(2)}
\lambda_s(t^2x^*x+\frac{1}{t^2}\mathbb{I})\geq
\lambda_s(2Rex)\geq\lambda_s(-(t^2x^*x+\frac{1}{t^2}\mathbb{I})), 0<s<1
\end{equation}
for all $s>0.$
An easy calculation shows that
$\mu_s(y+\mathbb{I})=\mu_s(y)+1$ and
$\mu_s^\ell(y+\mathbb{I})=\mu_s^\ell(y)+1$ for $y\geq0$ and $0<s<1.$
Combining this with (\ref{inequ. lemma 3.1(2)})
and Proposition \ref{proposition 2.2} we can assert that
\begin{align*}t^2\mu_s(x)^2+\frac{1}{t^2}1&=
\mu_s(t^2x^*x+\frac{1}{t^2}\mathbb{I})\\
&=\lambda_s(t^2x^*x+\frac{1}{t^2}\mathbb{I})\\
&\geq
\lambda_s(2Rex), 0<s<1.
\end{align*}
If it was true that $\mu_s(x)=0$ for some $s>0$,
there would be $2\lambda_{t}(Rex)=\lambda_{t}(2Rex)\leq0$
by take $t\rightarrow\infty$.
Otherwise,
we take $t=\frac{1}{\mu_s(x)^{\frac{1}{2}}}$, it follows that
$\lambda_s(Rex)\leq\mu_s(x), 0<t<1.$

On the other hand, combining (\ref{inequ. lemma 3.1(2)}) with
 Proposition \ref{proposition 2.2} and
 Lemma \ref{lemma 0000} yields
\begin{align*}
\lambda_s(2Rex)&\geq \lambda_s(-(t^2x^*x+\frac{1}{t^2}\mathbb{I}))\\
&=-\mu_{1-s}^\ell(t^2x^*x+\frac{1}{t^2}\mathbb{I})\\
&=-t^2\mu_{1-s}^\ell(x^*x)-\frac{1}{t^2}, 0<s<1.
\end{align*}
We now apply the above argument again, with $\mu(x)$ replaced by $\mu^\ell(x)$, to obtain
$\lambda_s(Rex)\geq-\mu_{1-s}^\ell(x), 0<s<1.$
Finally, since $Re(-ix)=Im x$, from what has already been proved  we see that
$-\mu_{1-s}^\ell(x)=-\mu_{1-s}^\ell(-ix)\leq\lambda_s(Imx)\leq\mu_s(-ix)=\mu_s(x), 0<s<1.$
\end{proof}

\begin{remark}\label{rk:lemma 3.1}
\begin{enumerate}
\item Let $x\in \mathcal{M}$. From the proof of Lemma \ref{lemma 0000} we have
\[\lambda_s(Rex)\leq\mu_s(x), ~~\lambda_s(Imx)\leq\mu_s(x), 0<s<1.\]

\item Let $x\in \mathcal{M}$.
It follows from inequality (\ref{inequa. lemma 3.1}) that
\[t^2x^*x+\frac{1}{t^2}\mathbb{I}\geq 2Rex\geq -(t^2x^*x+\frac{1}{t^2}\mathbb{I}).\]
Moreover, \cite[Lemma 4.2]{HSZ2020} means that
$\mu(2Rex)\prec\prec_{\log}\mu(t^2x^*x+\frac{1}{t^2}\mathbb{I})$ for all $t>0,$
with $-\infty$ allowed for values.
Moreover, we have
\[\Delta_\tau(2Rex)\leq\Delta_\tau(t^2x^*x+\frac{1}{t^2}\mathbb{I}), t>0.\]
\end{enumerate}
\end{remark}

\begin{corollary}\label{corollary 3.2}
Let $x, y\in \mathcal{M}$ and let $\alpha\in \mathbb{R}$. If $x^{*}=x$, then
\[ \lambda_s(iRey-iy)\leq\mu_s(y-\alpha x), 0<s<1\]
and
\[ \lambda_s(y-iIm y)\leq\mu_s(y-i\alpha x), 0<s<1.\]
\end{corollary}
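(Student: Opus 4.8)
The plan is to reduce both claimed bounds to the upper estimates $\lambda_s(Re w)\le\mu_s(w)$ and $\lambda_s(Im w)\le\mu_s(w)$, valid for every $w\in\mathcal{M}$, which are recorded in Remark \ref{rk:lemma 3.1}(1) (equivalently, the right-hand inequalities of Proposition \ref{lemma 3.1}). The entire argument rests on two elementary identities together with the effect of adding a self-adjoint, respectively skew-adjoint, perturbation to $y$.

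First I would record the two algebraic identities
\[
iRey-iy=Imy,\qquad y-iImy=Rey,
\]
which follow at once from $Rey=\tfrac{1}{2}(y+y^*)$ and $Imy=\tfrac{1}{2i}(y-y^*)$ by a direct computation: indeed $Rey-y=\tfrac12(y^*-y)=-iImy$, so multiplying by $i$ gives the first identity, while $iImy=\tfrac12(y-y^*)$ gives $y-iImy=\tfrac12(y+y^*)=Rey$.

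Next I would exploit the hypotheses $x^*=x$ and $\alpha\in\mathbb{R}$. Since $\alpha x$ is then self-adjoint, its imaginary part vanishes, so $Im(y-\alpha x)=Imy$; since $i\alpha x$ is skew-adjoint, its real part vanishes, so $Re(y-i\alpha x)=Rey$. Combining these observations with the identities above and applying the two upper bounds from Remark \ref{rk:lemma 3.1}(1) to the elements $w=y-\alpha x$ and $w=y-i\alpha x$ respectively, one obtains
\[
\lambda_s(iRey-iy)=\lambda_s(Im(y-\alpha x))\le\mu_s(y-\alpha x),
\]
\[
\lambda_s(y-iImy)=\lambda_s(Re(y-i\alpha x))\le\mu_s(y-i\alpha x),
\]
for $0<s<1$, which are precisely the two asserted inequalities.

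There is no genuine analytic obstacle here; all the analytic content is already packaged in Proposition \ref{lemma 3.1}. The only point requiring care is the bookkeeping with the factors of $i$: specifically, correctly identifying $iRey-iy$ with $Imy$ and $y-iImy$ with $Rey$, and recognizing that adding a real scalar multiple of the self-adjoint $x$ leaves $Imy$ unchanged whereas adding $i$ times such a multiple leaves $Rey$ unchanged.
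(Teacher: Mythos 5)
Your proof is correct and takes essentially the same route as the paper's: both reduce the two claims to the upper bounds $\lambda_s(Re\,w)\le\mu_s(w)$ and $\lambda_s(Im\,w)\le\mu_s(w)$ from Remark \ref{rk:lemma 3.1}(1), applied to $w=y-\alpha x$ and $w=y-i\alpha x$ via the identities $iRey-iy=Imy=Im(y-\alpha x)$ and $y-iImy=Rey=Re(y-i\alpha x)$. Your write-up merely spells out the bookkeeping with the factors of $i$ that the paper leaves implicit.
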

\begin{proof} The results follow from Remark \ref{rk:lemma 3.1}(1)
along with the fact that  $-i(y-Rey)=Imy=Im(y-\alpha x)$ and
$y-i Imy=Rey=Re(y-i\alpha x)$.
\end{proof}

\begin{proposition}\label{proposition 3.5}
Let $0\leq x\in \mathcal{M}$ such that $\|x\|>1$.
\begin{enumerate}
\item If $u\in \mathcal{M}$ is an unitary operator, then
\[\mu(x-Reu)\prec\prec_{\log}\mu(x+\mathbb{I}),\]
which implies that
\[\Delta_\tau(x-Reu)\leq\Delta_\tau(x+\mathbb{I}).\]
\item If $u\in \mathcal{M}$ is an unitary operator and
$\tau(|x-\mathbb{I}|)=\tau(|x-u|)$, then
\[\Delta_\tau(x-u)\leq\Delta_\tau(x-\mathbb{I}).\]
\end{enumerate}
\end{proposition}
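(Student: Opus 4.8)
The plan is to handle the two parts by quite different means: (1) is a short two--sided--bound argument, whereas (2) rests on the fact that $\mathbb I$ is a best unitary approximant to a positive operator, which is the genuinely delicate ingredient.

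For (1), since $u$ is unitary, $\mathrm{Re}\,u=\tfrac12(u+u^{*})$ is self--adjoint with $-\mathbb I\le \mathrm{Re}\,u\le \mathbb I$. As $x\ge 0$, this produces the operator sandwich
\[
-(x+\mathbb I)\ \le\ x-\mathrm{Re}\,u\ \le\ x+\mathbb I,\qquad x+\mathbb I\ge 0 .
\]
Exactly as in Remark \ref{rk:lemma 3.1}(2), \cite[Lemma 4.2]{HSZ2020} converts a two--sided bound $-b\le a\le b$ with $b\ge0$ into $\mu(a)\prec\prec_{\log}\mu(b)$; applying it with $a=x-\mathrm{Re}\,u$ and $b=x+\mathbb I$ gives $\mu(x-\mathrm{Re}\,u)\prec\prec_{\log}\mu(x+\mathbb I)$, and Proposition \ref{proposition 2.4}(6) then yields $\Delta_\tau(x-\mathrm{Re}\,u)\le\Delta_\tau(x+\mathbb I)$. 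The hypothesis $\|x\|>1$ is not used here; it is a standing assumption felt only in (2).

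For (2) I would first reduce the determinant inequality to a submajorisation. Using $\Delta_\tau(y)=\exp\int_0^1\log\mu_s(y)\,ds$ and $\tau(|y|)=\int_0^1\mu_s(y)\,ds$ (Proposition \ref{proposition 2.2}(2)), the hypothesis $\tau(|x-\mathbb I|)=\tau(|x-u|)$ says precisely that $\mu(x-\mathbb I)$ and $\mu(x-u)$ have the same integral over $(0,1)$. The scalar inequality (\ref{ineq 3.1}) in the form $|r-1|\le|r-v|$ (for $r\ge0$, $|v|=1$) indicates that $\mathbb I$ is the closest unitary to $x\ge0$, whose operator counterpart is the submajorisation
\[
\int_0^t\mu_s(x-\mathbb I)\,ds\ \le\ \int_0^t\mu_s(x-u)\,ds,\qquad 0<t\le 1 .
\]
Granting this, equality of the two integrals at $t=1$ upgrades it to a genuine majorisation $\mu(x-\mathbb I)\prec\mu(x-u)$, and since $\log$ is concave the Hardy--Littlewood--Pólya inequality (\cite[Ch.~1]{MO1979}, in the integral form already invoked in Remark \ref{rk:determanint 2}) gives $\int_0^1\log\mu_s(x-u)\,ds\le\int_0^1\log\mu_s(x-\mathbb I)\,ds$, i.e.\ $\Delta_\tau(x-u)\le\Delta_\tau(x-\mathbb I)$; the value $-\infty$ is harmless by Proposition \ref{proposition 2.4}(5)--(6).

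The heart of the matter, and the step I expect to be the main obstacle, is the displayed submajorisation. One half is cheap: combining Proposition \ref{lemma 3.1} applied to $x-u$ with the monotonicity of $\lambda_s$ and $-\mathrm{Re}\,u\ge-\mathbb I$ yields
\[
\lambda_s(x-\mathbb I)\ \le\ \lambda_s(x-\mathrm{Re}\,u)=\lambda_s(\mathrm{Re}(x-u))\ \le\ \mu_s(x-u),
\]
so the \emph{positive} spectral part of $x-\mathbb I$ (where $\|x\|>1$ is felt) is already dominated pointwise. The difficulty is the negative part: the spectral values $h<1$ of $x$ contribute $1-h\le1$ to $\mu(x-\mathbb I)$, and here passing through $\mathrm{Re}\,u$ is lossy, since $\mu(x-\mathbb I)\prec\prec\mu(x-\mathrm{Re}\,u)$ fails in general and the estimate must use $x-u$ directly. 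I would establish it through the Ky Fan description $\int_0^t\mu_s(a)\,ds=\sup\{\mathrm{Re}\,\tau(av):v\in\mathcal M,\ \|v\|\le1,\ \tau(\mathrm{supp}\,v)\le t\}$: with the self--adjoint competitor $v_0=\mathrm{sgn}(x-\mathbb I)\,e$ one has $\int_0^t\mu_s(x-\mathbb I)\,ds=\mathrm{Re}\,\tau((x-u)v_0)+\mathrm{Re}\,\tau((u-\mathbb I)v_0)$, and the only genuinely delicate point is to replace $v_0$ by a competitor whose phase on the negative eigenspaces of $x-\mathbb I$ is adapted to $u$, so that the cross term $\mathrm{Re}\,\tau((u-\mathbb I)v)$ becomes nonpositive while $v$ stays optimal for $x-\mathbb I$; this is the noncommutative form of the Fan--Hoffman best--unitary--approximation argument.
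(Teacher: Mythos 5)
Your argument follows essentially the same route as the paper. Part (1) is identical: the sandwich $-(x+\mathbb I)\le x-\mathrm{Re}\,u\le x+\mathbb I$ plus \cite[Lemma 4.2]{HSZ2020} and Proposition \ref{proposition 2.4}(6). For part (2), your two-stage structure also matches the paper's: first the submajorisation $\int_0^t\mu_s(x-\mathbb I)\,ds\le\int_0^t\mu_s(x-u)\,ds$ expressing that $\mathbb I$ is a best unitary approximant to $x\ge 0$, then the upgrade to a determinant inequality using the equality of traces. The paper obtains the first stage by citing \cite[Corollary 2.6]{DD1992} (Dodds--Dodds, ``Unitary approximation and submajorization''), and the second by citing \cite[Theorem 3.3]{BR2014} to get $\tau(|x-u|^p)\le\tau(|x-\mathbb I|^p)$ for $0<p<1$ and then letting $p\to 0$ via the limit formula from \cite[p.74]{R1974}; your Hardy--Littlewood--P\'olya argument for concave functions is the same content as the \cite{BR2014} step and is fine.

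The one genuine shortfall is that your from-scratch attempt at the submajorisation is not a proof: after introducing the Ky Fan variational formula and the competitor $v_0=\mathrm{sgn}(x-\mathbb I)e$, you state that ``the only genuinely delicate point is to replace $v_0$ by a competitor whose phase on the negative eigenspaces of $x-\mathbb I$ is adapted to $u$'' and leave that replacement unconstructed. That delicate point \emph{is} the whole lemma --- the cross term $\mathrm{Re}\,\tau((u-\mathbb I)v)$ does not become nonpositive for the naive choice, and making it so while keeping $v$ optimal for $x-\mathbb I$ is exactly the Fan--Hoffman/Dodds--Dodds argument. You correctly diagnose where the difficulty sits (the spectral region where $x<\mathbb I$, which is also the only place the hypothesis $\|x\|>1$ versus $x\ge\mathbb I$ matters), but as written the step is asserted rather than proved. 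Replacing your sketch by the citation \cite[Corollary 2.6]{DD1992}, as the paper does, closes the argument; everything downstream of that point in your write-up is correct.
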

\begin{proof}
(1). From $-\mathbb{I}\leq -Reu\leq\mathbb{I}$, we deduce that
$-(x+\mathbb{I})\leq x-\mathbb{I}\leq x-Reu\leq x+\mathbb{I}.$
Then we conclude from \cite[Lemma 4.2]{HSZ2020} that
\[\mu(x-Reu)\prec\prec_{\log}\mu(x+\mathbb{I}).\]
Hence we see that
\[\Delta_\tau(x-Reu)\leq\Delta_\tau(x+\mathbb{I}).\]

(2). Note that
\cite[Corollary 2.6]{DD1992} leads to
\[\mu(x-\mathbb{I})\prec\prec\mu(x-u).\]
Since $\tau(|x-\mathbb{I}|)=\tau(|x-u|)$,  \cite[Theorem 3.3]{BR2014} shows
that
$\tau(|x-u|^p)\leq\tau(|x-\mathbb{I}|^p), 0<p<1$, i.e.
\[\int_0^{1}\mu_t(x-u)^p\leq
\int_0^{1}\mu_t(x-\mathbb{I})^pdt, 0<p<1.\]
Hence, from
$\int_0^{1}|f(s)|^pds)^{\frac{1}{p}}<\infty$
and \cite[p.74]{R1974} we obtain
\[exp\{\int_0^{1}\log|f(s)|ds\}=\lim_{p\rightarrow0}
(\int_0^{1}|f(s)|^pds)^{\frac{1}{p}},\]
which force
$\Delta_\tau(x-u)\leq\Delta_\tau(x-\mathbb{I}).$
\end{proof}

\begin{lemma}\label{lemma3.6}
Let $0\leq x\in \mathcal{M}$ be invertible. Then
\[\mu_t^\ell(x^{-1})=\mu_{1-t}(x)^{-1}, 0<t<1.\]
\end{lemma}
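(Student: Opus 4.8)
The plan is to reduce the identity to the functional-calculus formula for the spectral scale under \emph{increasing} continuous functions, namely Proposition~\ref{proposition 2.2}(9), together with the reflection identity $\lambda_s(-y)=-\mu_{1-s}^\ell(y)$ supplied by Lemma~\ref{lemma 0000}. The key difficulty is that the natural map $\lambda\mapsto 1/\lambda$ is \emph{decreasing}, not increasing, so it cannot be fed directly into Proposition~\ref{proposition 2.2}(9); the device will be to work instead with $\lambda\mapsto-1/\lambda$, which \emph{is} increasing, and then to convert $-x^{-1}$ back to $\mu^\ell(x^{-1})$ through Lemma~\ref{lemma 0000}.

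First I would exploit invertibility: since $0\leq x$ is invertible, its spectrum lies in $[c,\|x\|]$ with $c=\|x^{-1}\|^{-1}>0$, so $x\geq c\mathbb{I}$. On $[c,\|x\|]$ the function $\lambda\mapsto-1/\lambda$ is continuous and strictly increasing, and I would extend it to a continuous increasing function $f$ on all of $\mathbb{R}$ (any monotone continuous prolongation off the spectrum will do). Because $f$ agrees with $-1/\lambda$ on $\mathrm{spec}(x)$, functional calculus gives $f(x)=-x^{-1}$. Now Proposition~\ref{proposition 2.2}(9) applies to the self-adjoint $x$ and yields, for $0<s<1$,
\[
\lambda_s(-x^{-1})=\lambda_s(f(x))=f(\lambda_s(x))=-\lambda_s(x)^{-1}=-\mu_s(x)^{-1},
\]
where the last equality uses $\lambda_s(x)=\mu_s(x)$ for $0\leq x$ and $0<s<1$.

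Next I would bring in Lemma~\ref{lemma 0000}. Since $x\geq0$ is invertible, $x^{-1}\in\mathcal{M}^+$, so the lemma applied to $x^{-1}$ gives $\lambda_s(-x^{-1})=-\mu_{1-s}^\ell(x^{-1})$ for $0<s<1$. Comparing this with the display above forces $\mu_{1-s}^\ell(x^{-1})=\mu_s(x)^{-1}$, and the substitution $t=1-s$ delivers exactly $\mu_t^\ell(x^{-1})=\mu_{1-t}(x)^{-1}$ for $0<t<1$.

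I expect the only genuinely delicate point to be the careful tracking of the left-continuous value $\mu^\ell$ against the right-continuous $\mu$, and correspondingly the index reflection $s\leftrightarrow 1-s$: this asymmetry is precisely what Lemma~\ref{lemma 0000} encodes, so the computation must invoke $\mu^\ell$ (not $\mu$) on the $x^{-1}$ side, and the legitimacy of the functional-calculus step hinges on $x\geq c\mathbb{I}$ keeping us away from the singularity of $-1/\lambda$ at the origin. Should one prefer to bypass Lemma~\ref{lemma 0000}, the same equality follows directly from distribution functions, writing $\mu_t^\ell(y)=\inf\{r:\tau(e_{(r,\infty)}(y))<t\}$ and using $e^{x^{-1}}_{(\mu,\infty)}=e^x_{(-\infty,1/\mu)}$; but the monotone functional-calculus route above is shorter and keeps all the continuity bookkeeping inside the already-proved Lemma~\ref{lemma 0000}.
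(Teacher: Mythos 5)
Your proof is correct, and it takes a genuinely different route from the paper. The paper proves the identity from scratch: it first verifies the formula by direct computation for step operators $x=\sum_i\alpha_ip_i$ (comparing the explicit expressions for $\mu_t(x)$ and $\mu_t^\ell(x^{-1})$ in terms of the partial traces $d_i$), and then handles general invertible $x\geq0$ by dyadic spectral approximation $x_n=f_n(x)$, with $\|x-x_n\|\to0$, $\|x^{-1}-x_n^{-1}\|\to0$, and a $\limsup$/$\liminf$ argument to pass the identity to the limit. You instead observe that the two ingredients needed are already on the shelf: Lemma~\ref{lemma 0000} (the reflection $\lambda_s(-y)=-\mu_{1-s}^\ell(y)$ for $y\in\mathcal{M}^+$, applied to $y=x^{-1}$) and Proposition~\ref{proposition 2.2}(9) (applied to an increasing continuous extension of $\lambda\mapsto-1/\lambda$ off the spectrum, which is legitimate because invertibility gives $x\geq c\mathbb{I}$ with $c=\|x^{-1}\|^{-1}>0$, so both the spectrum and the values $\lambda_s(x)=\mu_s(x)$ stay in $[c,\|x\|]$ where $f$ agrees with $-1/\lambda$). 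Equating $\lambda_s(-x^{-1})=-\mu_s(x)^{-1}$ with $\lambda_s(-x^{-1})=-\mu_{1-s}^\ell(x^{-1})$ and substituting $t=1-s$ finishes it. Your argument is shorter and pushes all the left-/right-continuity bookkeeping into the already-proved Lemma~\ref{lemma 0000}, at the cost of depending on the functional-calculus identity (9), which the paper only cites; the paper's version is more self-contained but essentially re-runs the approximation machinery of Lemma~\ref{lemma 0000} a second time.
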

\begin{proof}
Without loss of generality, we may assume that
$\mathcal{M}$ has no minimal projections
(otherwise we consider the von Neumann algebra
$\mathcal{M}\otimes L^{\infty}([0, 1])$).
First we assume that
$x=\sum_{i=1}^n\alpha_i p_i$ with $\alpha_1>\alpha_2>\cdots>\alpha_n>0$ and
$\sum_{i=1}^n p_i=1, p_ip_j=0, i\neq j$.
Thus $x^{-1}=\sum_{i=1}^n\frac{1}{\alpha_i} p_i$.
Let $d_i=\sum_{j=1}^i\tau(p_j), 1\leq i\leq n$. Then $d_n=\tau(\mathbb{I})=1$,
\[\mu_t(x)=\alpha_1\chi_{(0, d_1)}+\sum_{i=2}^n\alpha_i\chi_{[d_{i-1}, d_i)}, 0<t<1,\]
and
\[\mu_t^\ell(x^{-1})=
\sum_{i=2}^n\frac{1}{\alpha_i}\chi_{(1-d_{i}, 1-d_{i-1}]}+\frac{1}{\alpha_1}\chi_{(1-d_1, 1)}, 0<t<1.\]
Therefore,
\[\mu_t^\ell(x^{-1})=\frac{1}{\mu_{1-t}(x)}, 0<t<1.\]
For the general case, let $0\leq x\in \mathcal{M}$. Since $x^{-1}\in \mathcal{M}$,
there exists $\delta>0$ such that
$x=\int_\delta^{\|x\|} \lambda de_\lambda(x)$ is the spectral decomposition of $x$.
Put \[f_k(t)=\sum_{j=1}^{2^n}(\delta+\frac{(j-1)a}{2^n})
\chi_{[\delta+\frac{(j-1)a}{2^n}, \delta+\frac{ja}{2^n})},\]
 where $a=\|x\|-\delta>0$. Obviously,
$0\leq f_k(t)\leq f_{k+1}(t)\leq t.$
Set
\[
x_n=f_n(|x|)=\sum_{j=1}^{2^n}(\delta+\frac{(j-1)a}{2^n})
e_{[\delta+\frac{(j-1)a}{2^n}, \delta+\frac{ja}{2^n})}(x).
\]
Then
\[
x_n^{-1}=f_n(|x|)=\sum_{j=1}^{2^n}(\delta+\frac{(j-1)a}{2^n})^{-1}
e_{[\delta+\frac{(j-1)a}{2^n}, \delta+\frac{ja}{2^n})}(x).
\]
It follows that $\|x-x_n\|\leq\frac{a}{2^n}$ and
\[\|x^{-1}-x_n^{-1}\|\leq (\delta+\frac{(j-1)a}{2^n})^{-1}-
(\delta+\frac{ja}{2^n})^{-1}\leq\frac{1}{\delta^2}\frac{a}{2^{n}}.\]
Hence we infer from \cite[Lemma 3.4]{FK1986} that
$\mu_t(x)=\lim_{n\rightarrow\infty}\mu_t(x_n)$.
On the other hand, picking up a small $\epsilon>0,$ we obtain
\[\mu^\ell_t(x^{-1}_n)\leq\mu_{t-\epsilon}^\ell(x^{-1})+\mu_\epsilon^\ell(x^{-1}-x_n^{-1})
\leq\mu_{t-\epsilon}^\ell(x^{-1})+\|x^{-1}-x_n^{-1}\|.\]
Letting $\epsilon\downarrow0$ we get
\[\mu^\ell_t(x^{-1}_n)
\leq\mu_{t}^\ell(x^{-1})+\|x^{-1}-x_n^{-1}\|.\]
In consequence,  $\limsup_{n\rightarrow\infty}\mu^\ell_t(x^{-1}_n)\leq\mu_{t}^\ell(x^{-1})$.
Therefore,  $x^{-1}\leq x_n^{-1}$ tells us that
\[\liminf_{n\rightarrow\infty}\mu^\ell_t(x^{-1}_n)\geq\mu_{t}^\ell(x^{-1}).\]
Hence
$\mu_t^{\ell}(x)=\lim_{n\rightarrow\infty}\mu_t^\ell(x_n)$.
This completes the proof.
\end{proof}

\begin{example}
 Let $\mathcal{H}=\mathbb{C}^n$ and let $\mathcal{M}=\mathcal{B}(\mathcal{H})\cong \mathbb{M}_n(\mathbb{C})$
equipped with the normalized trace  $\tau_n:\triangleq \frac{1}{n}tr_n$ where
$tr_n$ is the standard trace on $\mathbb{M}_n(\mathbb{C})$.
  If $x\in \mathbb{M}_n(\mathbb{C})$ is positive and invertible, then
$x$ can be written as $x=\sum_{i=1}^n\alpha_j p_j$,
where $\alpha_1\geq\alpha_2\geq\cdots\geq\alpha_n>0$ is the sequence
of eigenvalues of $x$ in which each is repeated according to its multiplicity
and $\sum_{i=1}^n p_j=\mathbb{I}$.
The proof of Lemma \ref{lemma3.6} tells us that
\[\mu_t^\ell(x^{-1})=\mu_{\tau(\mathbb{I})-t}(x)^{-1}, 0<t<\tau(\mathbb{I}),\]
i.e.
\[\mu_{\frac{k}{n}}^\ell(x^{-1})=(\alpha_{n+1-k})^{-1}=\mu_{\tau(\mathbb{I})-\frac{k}{n}}(x)^{-1}.\]
\end{example}

We conclude this section with a series of inequalities of generalized singular value function.
\begin{lemma}\label{lemma 3.7}
Let $x, y\in\mathcal{M}$.
\begin{enumerate}
\item If $x^*=x$, then $\lambda_t(x)\leq\mu_t(x).$
\item If $s, t>0$ such that $s+t<1$, then
$1\leq\mu_t(x)+\mu_s(\mathbb{I}-x)$ and $1\leq\mu_t^\ell(x)+\mu_s^\ell(\mathbb{I}-x)$.
\item For any $t>0$ we have
$1\leq\mu_t(x)+\mu_{1-t}^\ell(\mathbb{I}-x)$, $1\leq\mu_t^\ell(x)+\mu_{1-t}^\ell(\mathbb{I}-x)$
and $1\leq\mu_t^\ell(x)+\mu_{1-t}(\mathbb{I}-x)$.
\item For any $t>0$ we have
$1\leq\mu_t(x)+\mu_{1-t}^\ell(x\pm i\mathbb{I})$,
$1\leq\mu_t^\ell(x)+\mu_{1-t}^\ell(x\pm i\mathbb{I})$
and $1\leq\mu_t^\ell(x)+\mu_{1-t}(x\pm i\mathbb{I})$.
\item If $0\leq x\in \mathcal{M}$ and $\|x\|\leq1$, then
\[
\mu_t(1-x)=1-\mu_{1-t}^\ell(x),~~\mu_t^\ell(1-x)=1-\mu_{1-t}(x).
\]
\end{enumerate}
\end{lemma}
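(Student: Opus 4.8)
The plan is to prove the five parts in order, relying on the subadditivity and monotonicity in Proposition \ref{proposition 2.2}, on Lemma \ref{lemma 0000}, and on the elementary relation $\mu_t^\ell(x)=\lim_{s\uparrow t}\mu_s(x)\geq\mu_t(x)$ (the inequality holds because the set of projections admissible in the definition of $\mu_t^\ell$ is contained in the one defining $\mu_t$). For (1), I would use that $x\leq|x|$ when $x^*=x$, so Proposition \ref{proposition 2.2}(8) gives $\lambda_t(x)\leq\lambda_t(|x|)$, and since $|x|\geq0$ one has $\lambda_t(|x|)=\mu_t(|x|)=\mu_t(x)$. For (2), I would apply the subadditivity of Proposition \ref{proposition 2.2}(3) to the decomposition $\mathbb{I}=x+(\mathbb{I}-x)$, obtaining $\mu_{s+t}(\mathbb{I})\leq\mu_t(x)+\mu_s(\mathbb{I}-x)$; since $s+t<1=\tau(\mathbb{I})$ forces $\mu_{s+t}(\mathbb{I})=1$, the first inequality follows, and the $\mu^\ell$ version is immediate from $\mu^\ell\geq\mu$.

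The heart of the argument is (3), where the sum of the indices equals $1$ and the distinction between $\mu$ and $\mu^\ell$ becomes essential. I would pass to this boundary case from (2) by a one-sided continuity argument: for small $\epsilon>0$ the pair $(t,1-t-\epsilon)$ has sum $1-\epsilon<1$, so (2) yields $1\leq\mu_t(x)+\mu_{1-t-\epsilon}(\mathbb{I}-x)$, and letting $\epsilon\downarrow0$ together with $\mu_{1-t-\epsilon}(\mathbb{I}-x)\to\mu_{1-t}^\ell(\mathbb{I}-x)$ gives the first inequality. The second follows since $\mu_t^\ell\geq\mu_t$, and the third is obtained symmetrically by writing $\mu_t^\ell(x)=\lim_{\epsilon\downarrow0}\mu_{t-\epsilon}(x)$ and applying (2) to the pair $(t-\epsilon,1-t)$.

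For (4) I would reduce directly to (3) by multiplying by a unimodular scalar: since $\mu(\alpha z)=|\alpha|\mu(z)$ by Proposition \ref{proposition 2.2}(1) (and likewise for $\mu^\ell$), one has $\mu_r(x)=\mu_r(\pm ix)$ and $\mu_r^\ell(x\pm i\mathbb{I})=\mu_r^\ell(\mp i(x\pm i\mathbb{I}))=\mu_r^\ell(\mathbb{I}\mp ix)$, so the substitution $y=\pm ix$ converts each desired inequality into the corresponding inequality of (3) for $y$. For the first identity in (5), I would use that $0\leq x\leq\mathbb{I}$ gives $\mathbb{I}-x\geq0$, compute $\mu_t(\mathbb{I}-x)=\lambda_t(\mathbb{I}-x)=1+\lambda_t(-x)$ via the translation property $\lambda_t(z+a\mathbb{I})=\lambda_t(z)+a$, and then invoke Lemma \ref{lemma 0000} to replace $\lambda_t(-x)$ by $-\mu_{1-t}^\ell(x)$. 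For the second identity I would avoid a fresh continuity computation by substituting $\mathbb{I}-x$ (which also lies in $[0,\mathbb{I}]$) for $x$ in the first identity, obtaining $\mu_{1-t}^\ell(\mathbb{I}-x)=1-\mu_t(x)$, and then relabeling $t\mapsto1-t$.

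I expect the main obstacle to be the boundary passage in (3): parts (1) and (2) are short applications of monotonicity and subadditivity, while (4) and the second half of (5) reduce cleanly by algebraic substitutions to (3) and to the first identity of (5). The one genuinely delicate point is justifying the limit transition from $s+t<1$ to $s+t=1$, where one must use the one-sided continuity of the singular value functions and the precise interplay between $\mu$ and $\mu^\ell$.
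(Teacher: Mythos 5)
Your proof is correct, and for the most part it fleshes out the same skeleton the paper uses: part (1) is identical ($x\leq|x|$ plus monotonicity of $\lambda_t$), and part (2) is the same one-line application of subadditivity to $\mathbb{I}=x+(\mathbb{I}-x)$. Where you genuinely diverge is in (3)--(5). For (3) the paper simply asserts that the inequalities ``follow from'' the subadditivity of $\mu$ and of $\mu^\ell$; but the mixed inequalities such as $1\leq\mu_t(x)+\mu_{1-t}^\ell(\mathbb{I}-x)$ do not follow from either pure form at the boundary $s+t=1$ (where $\mu_1(\mathbb{I})=0$), and your $\epsilon$-limiting argument, using $\mu_r^\ell=\lim_{s\uparrow r}\mu_s$ together with $\mu_r^\ell\geq\mu_r$, is exactly the missing justification --- you correctly flag this as the delicate point. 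Your reduction of (4) to (3) by unimodular scalars is equivalent to the paper's implicit direct use of subadditivity on $\mathbb{I}=\mp i(x\pm i\mathbb{I})\pm ix$, just packaged differently. For (5) the paper re-runs the step-function approximation scheme of Lemma \ref{lemma3.6} from scratch, whereas you derive the first identity from Lemma \ref{lemma 0000} via $\mu_t(\mathbb{I}-x)=\lambda_t(\mathbb{I}-x)=1+\lambda_t(-x)=1-\mu_{1-t}^\ell(x)$ and get the second by the substitution $x\mapsto\mathbb{I}-x$ and relabeling $t\mapsto 1-t$; this is shorter, reuses work already done, and avoids a second approximation argument, at the cost of depending on Lemma \ref{lemma 0000} and the translation property of $\lambda_t$ rather than being self-contained.
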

\begin{proof}
(1). Since $-|x|\leq x\leq |x|$, $\lambda_t(x)\leq\lambda_t(|x|)=\mu_x(x).$  (2)-(4)
follow from the fact $\mu_{s+t}(x+y)\leq\mu_t(x)+\mu_s(y)$ and
$\mu_{s+t}^\ell(x+y)\leq\mu_t^{\ell}(x)+\mu_s^\ell(y)$.
 (5). This follows by the same method as in Lemma \ref{lemma3.6}.
\end{proof}

\section{Harnack type inequality for operator}
In this section Harnack type inequalities for operators
in Logarithmic submajorisation are stated and proved.
We will extend the results of Yang-\,-Zhang\cite{YZ2020} and Lin-\,-Zhang\cite{LZ2017}
to the case of finite von Neumann algebra.
We start with a lemma which follows by the same method as in
\cite[Proposition 2]{YZ2020}.
\begin{lemma}\label{lemma 3.8}
Let $x\in \mathcal{M}$. If  $\mathbb{I}-x$ is invertible, then
\begin{align*}
(\mathbb{I}-x^*)^{-1}(\mathbb{I}-x^*x)(\mathbb{I}-x)^{-1}
=&2Re((\mathbb{I}-x)^{-1})-\mathbb{I}\\
=&2Re((\mathbb{I}-x)^{-1}-\frac{1}{2}\mathbb{I})\\
=&Re((\mathbb{I}+x)(\mathbb{I}-x)^{-1})=S^*S,
\end{align*}
where $S=(\mathbb{I}-x^*x)^{\frac{1}{2}}(\mathbb{I}-x)^{-1}$.
Moreover, if $x\in\mathcal{M}$ with $\|x\|<1$, then $\mathbb{I}-x$ is invertible,
which implies that the equalities above are true.
\end{lemma}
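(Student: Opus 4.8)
The statement is a chain of algebraic identities together with an elementary invertibility remark, so the plan is to reduce everything to manipulations of the single resolvent $y:=(\mathbb{I}-x)^{-1}$ and then check the positivity needed to form $S$.

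First I would record the substitution. Since $\mathbb{I}-x$ is invertible, write $y=(\mathbb{I}-x)^{-1}$, so that $\mathbb{I}-x=y^{-1}$, $x=\mathbb{I}-y^{-1}$, and upon taking adjoints $(\mathbb{I}-x^*)^{-1}=y^*$ and $x^*=\mathbb{I}-(y^*)^{-1}$. Expanding the product then gives
\[
\mathbb{I}-x^*x=y^{-1}+(y^*)^{-1}-(y^*)^{-1}y^{-1}.
\]
Conjugating by $y^*$ on the left and $y$ on the right and cancelling by means of $y^{-1}y=\mathbb{I}$ and $y^*(y^*)^{-1}=\mathbb{I}$ yields
\[
(\mathbb{I}-x^*)^{-1}(\mathbb{I}-x^*x)(\mathbb{I}-x)^{-1}=y^*(\mathbb{I}-x^*x)y=y^*+y-\mathbb{I}=2\mathrm{Re}(y)-\mathbb{I},
\]
which is the first equality.

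The next two equalities are then immediate. Since $\mathrm{Re}(\tfrac12\mathbb{I})=\tfrac12\mathbb{I}$, we have $2\mathrm{Re}(y)-\mathbb{I}=2\mathrm{Re}(y-\tfrac12\mathbb{I})$. For the factored form I would compute directly that $(\mathbb{I}+x)(\mathbb{I}-x)^{-1}=(2\mathbb{I}-y^{-1})y=2y-\mathbb{I}$, whence $\mathrm{Re}((\mathbb{I}+x)(\mathbb{I}-x)^{-1})=2\mathrm{Re}(y)-\mathbb{I}$ as well.

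For the final identity I would note that $\mathbb{I}-x^*x$ is self-adjoint, so $\bigl((\mathbb{I}-x^*x)^{1/2}\bigr)^*=(\mathbb{I}-x^*x)^{1/2}$ and therefore
\[
S^*S=(\mathbb{I}-x^*)^{-1}(\mathbb{I}-x^*x)^{1/2}(\mathbb{I}-x^*x)^{1/2}(\mathbb{I}-x)^{-1}=(\mathbb{I}-x^*)^{-1}(\mathbb{I}-x^*x)(\mathbb{I}-x)^{-1},
\]
which closes the chain. The only point requiring care is that forming the square root $(\mathbb{I}-x^*x)^{1/2}$ presupposes $\mathbb{I}-x^*x\ge0$; this is precisely the role of contractivity, since for strictly contractive $x$ the preliminaries already record that $\mathbb{I}-x^*x\ge0$ (and is invertible). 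Finally, the ``moreover'' clause follows from the Neumann series: when $\|x\|<1$ the series $\sum_{n\ge0}x^n$ converges in norm to an inverse of $\mathbb{I}-x$, so $\mathbb{I}-x$ is invertible and all the preceding identities apply. There is no serious obstacle here; the whole lemma is algebraic, and the only thing to watch is that the $S^*S$ representation is meaningful exactly under the positivity $\mathbb{I}-x^*x\ge0$ guaranteed by contractivity.
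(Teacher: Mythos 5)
Your proof is correct and is essentially the direct algebraic verification (substituting $y=(\mathbb{I}-x)^{-1}$ and expanding) that the paper itself omits, deferring instead to the same computation in [YZ2020, Proposition 2]. Your added observation that the $S^*S$ identity only makes sense under $\mathbb{I}-x^*x\ge 0$, i.e.\ for contractive $x$, is a worthwhile precision that the lemma's statement glosses over.
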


\begin{theorem}\label{theorem 3.9}
Let $x\in \mathcal{M}$ with $\|x\|<1$. Then
\begin{equation}\label{inequ. theorem 3.9}
\mu_t((\mathbb{I}-x^*)^{-1}(\mathbb{I}-x^*x)(\mathbb{I}-x)^{-1})
\leq\frac{1+\mu_t(x)}{1-\mu_t(x)}, 0<t<1.
\end{equation}
Moreover, for any subset $K\subseteq[0, 1]$ we have
\begin{align*}
\int_K\log\mu_t((\mathbb{I}-x^*)^{-1}(\mathbb{I}-x^*x)(\mathbb{I}-x)^{-1})dt
&\leq\int_K\log\frac{1+\mu_t(x)}{1-\mu_t(x)}dt\\
&\leq\int_0^1\log\frac{1+\mu_t(x)}{1-\mu_t(x)}dt.
\end{align*}
In particular,
\begin{align*}
\frac{\Delta_\tau(\mathbb{I}-x^*x)}{\Delta_\tau(\mathbb{I}-x)^2}
\leq\exp\int_0^1\log\frac{1+\mu_t(x)}{1-\mu_t(x)}dt.
\end{align*}
\end{theorem}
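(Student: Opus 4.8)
The plan is to reduce all three assertions to the pointwise estimate \eqref{inequ. theorem 3.9} and then read off the integral and determinant statements. Write $T=(\mathbb{I}-x^*)^{-1}(\mathbb{I}-x^*x)(\mathbb{I}-x)^{-1}$. Since $\|x\|<1$, Lemma \ref{lemma 3.8} applies and gives $T=2\mathrm{Re}((\mathbb{I}-x)^{-1})-\mathbb{I}=S^*S\geq0$, so $\mu_t(T)=\lambda_t(T)$ on $(0,1)$. Using the shift property $\lambda_t(a+c\mathbb{I})=\lambda_t(a)+c$ and the positive homogeneity of $\lambda_t$ (Proposition \ref{proposition 2.2}(9)), together with $\lambda_t(\mathrm{Re}\,y)\leq\mu_t(y)$ from Proposition \ref{lemma 3.1} applied to $y=(\mathbb{I}-x)^{-1}$, I would obtain
\[
\mu_t(T)=2\lambda_t(\mathrm{Re}((\mathbb{I}-x)^{-1}))-1\leq 2\mu_t((\mathbb{I}-x)^{-1})-1 .
\]
Thus \eqref{inequ. theorem 3.9} follows once I show $\mu_t((\mathbb{I}-x)^{-1})\leq(1-\mu_t(x))^{-1}$, since $2(1-\mu_t(x))^{-1}-1=\frac{1+\mu_t(x)}{1-\mu_t(x)}$.

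The heart of the argument, and the step I expect to be the main obstacle, is this bound on $\mu_t((\mathbb{I}-x)^{-1})$. My plan is first to extend Lemma \ref{lemma3.6} from positive operators to the invertible operator $w=\mathbb{I}-x$: from $|w^{-1}|^2=(ww^*)^{-1}=(|w^*|^{-1})^2$ one gets $|w^{-1}|=|w^*|^{-1}$, so applying Lemma \ref{lemma3.6} to the positive invertible operator $|w^*|$ and using $\mu(w^*)=\mu(w)$ yields $\mu_t^\ell((\mathbb{I}-x)^{-1})=\mu_{1-t}(\mathbb{I}-x)^{-1}$. Next, the third inequality of Lemma \ref{lemma 3.7}(3), $1\leq\mu_t^\ell(x)+\mu_{1-t}(\mathbb{I}-x)$, gives the lower bound $\mu_{1-t}(\mathbb{I}-x)\geq1-\mu_t^\ell(x)$, whence $\mu_t^\ell((\mathbb{I}-x)^{-1})\leq(1-\mu_t^\ell(x))^{-1}$. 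The delicate point is the interplay between $\mu_t$ and $\mu_t^\ell$, which I would resolve by recalling that $\mu_t=\mu_t^\ell$ for almost every $t$; hence the chain above, and therefore \eqref{inequ. theorem 3.9}, holds for a.e. $t\in(0,1)$. Since $t\mapsto\mu_t(T)$ and $t\mapsto\frac{1+\mu_t(x)}{1-\mu_t(x)}$ are both right-continuous (the latter because $\mu_t(x)\leq\|x\|<1$ and $r\mapsto\frac{1+r}{1-r}$ is continuous on $[0,\|x\|]$), the estimate extends to every $t\in(0,1)$.

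With \eqref{inequ. theorem 3.9} in hand the integral statements are immediate. Applying the increasing function $\log$ pointwise and integrating over $K$ gives $\int_K\log\mu_t(T)\,dt\leq\int_K\log\frac{1+\mu_t(x)}{1-\mu_t(x)}\,dt$. For the second inequality I would observe that $\mu_t(x)\geq0$ forces $\frac{1+\mu_t(x)}{1-\mu_t(x)}\geq1$, so the integrand is nonnegative; enlarging the domain from $K\subseteq[0,1]$ to $[0,1]$ can then only increase the integral.

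Finally, for the determinant inequality I would evaluate $\Delta_\tau(T)$ by multiplicativity. Proposition \ref{proposition 2.4}(1),(2),(3) give $\Delta_\tau(T)=\Delta_\tau((\mathbb{I}-x^*)^{-1})\,\Delta_\tau(\mathbb{I}-x^*x)\,\Delta_\tau((\mathbb{I}-x)^{-1})=\Delta_\tau(\mathbb{I}-x^*x)\,\Delta_\tau(\mathbb{I}-x)^{-2}$, using $\Delta_\tau(\mathbb{I}-x^*)=\Delta_\tau(\mathbb{I}-x)$. Since $T$ is positive and invertible, $\Delta_\tau(T)=\Lambda_1(T)=\exp\int_0^1\log\mu_t(T)\,dt$, and combining this with the pointwise bound \eqref{inequ. theorem 3.9} yields $\frac{\Delta_\tau(\mathbb{I}-x^*x)}{\Delta_\tau(\mathbb{I}-x)^2}\leq\exp\int_0^1\log\frac{1+\mu_t(x)}{1-\mu_t(x)}\,dt$, completing the proof.
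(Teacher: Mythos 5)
Your proposal is correct and follows essentially the same route as the paper: reduce to $2\lambda_t(\mathrm{Re}((\mathbb{I}-x)^{-1}))-1$ via Lemma \ref{lemma 3.8} and Proposition \ref{proposition 2.2}(9), bound $\lambda_t(\mathrm{Re}\,y)\leq\mu_t(y)$, invert via Lemma \ref{lemma3.6}, and finish with Lemma \ref{lemma 3.7}(3) and the multiplicativity of $\Delta_\tau$. Your two refinements --- justifying the application of Lemma \ref{lemma3.6} to the non-positive invertible operator $\mathbb{I}-x$ through $|w^{-1}|=|w^*|^{-1}$, and the a.e.-plus-right-continuity argument reconciling $\mu_t$ with $\mu_t^\ell$ --- are details the paper passes over silently, but they do not change the method.
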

\begin{proof}
We conclude from the definition of $\mu_t(\cdot)$ and $\lambda_t(\cdot)$ that
\begin{align*}
\mu_t((\mathbb{I}-x^*)^{-1}(\mathbb{I}-x^*x)(\mathbb{I}-x)^{-1})
=&\lambda_t((\mathbb{I}-x^*)^{-1}(\mathbb{I}-x^*x)(\mathbb{I}-x)^{-1})\\
=&\lambda_t(2Re((\mathbb{I}-x)^{-1})-\mathbb{I})~(Lemma~\ref{lemma 3.8})\\
=&\lambda_t(2Re((\mathbb{I}-x)^{-1}))-1~(Proposition~\ref{proposition 2.2}(9))\\
\leq&\mu_t(2 (\mathbb{I}-x)^{-1})-1~(Remark~\ref{rk:lemma 3.1})\\
=&\frac{2}{\mu_{1-s}^\ell(\mathbb{I}-x)}-1~(Lemma~\ref{lemma3.6})\\
\leq&\frac{2}{1-\mu_t(x)}-1~(Lemma~\ref{lemma 3.7})\\
=&\frac{1+\mu_t(x)}{1-\mu_t(x)}, 0<t<1.
\end{align*}
Furthermore, since $\frac{1+\mu_t(x)}{1-\mu_t(x)}\geq1$, (\ref{inequ. theorem 3.9}) means that
\begin{align*}
\int_K\log\mu_t((\mathbb{I}-x^*)^{-1}(\mathbb{I}-x^*x)(\mathbb{I}-x)^{-1})dt
&\leq\int_K\log\frac{1+\mu_t(x)}{1-\mu_t(x)}dt\\
&\leq\int_0^1\log\frac{1+\mu_t(x)}{1-\mu_t(x)}dt.
\end{align*}
Finally, by (\ref{inequ. theorem 3.9}) and Proposition \ref{proposition 2.4}(1)-(3), we have
\begin{align*}
\frac{\Delta_\tau(\mathbb{I}-x^*x)}{\Delta_\tau(\mathbb{I}-x)^2}
&=\Delta_\tau((\mathbb{I}-x^*)^{-1}(\mathbb{I}-x^*x)(\mathbb{I}-x)^{-1})\\
&=\exp\int_0^1\log\mu_t((\mathbb{I}-x^*)^{-1}(\mathbb{I}-x^*x)(\mathbb{I}-x)^{-1})dt\\
&\leq\exp\int_0^1\log\frac{1+\mu_t(x)}{1-\mu_t(x)}dt.
\end{align*}
\end{proof}

To achieve one of our main results, we state for easy reference the following fact,
 which will be applied below.
\begin{lemma}\label{lemma 4.1}
Let $x, y\in\mathcal{M}$ be invertible.
If $K$ is a Borel subset of $[0, 1]$ with $m(K)=t$ $(m(K)$ denotes the
Lebesgue measure of $K)$, then
\begin{align*}
\int_K\log\mu_s(x)ds+\int_{0}^t\log\mu_{1-s}(y)ds
\leq
\int_K\log\mu_s(xy)ds.
\end{align*}
\end{lemma}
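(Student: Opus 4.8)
The plan is to reduce the asserted inequality, via the substitution $x=(xy)y^{-1}$ together with Lemma \ref{lemma3.6}, to the \emph{upper} multiplicative Horn--Gelfand--Naimark inequality for generalized singular numbers. First I would set $a=xy$ and $b=y^{-1}$; since $x,y$ are invertible so are $a,b,ab$, hence $\mu_s(a),\mu_s(b),\mu_s(ab)$ are bounded and bounded away from $0$ and every logarithm below is integrable. The one computational step is to rewrite the middle term purely in terms of $b$. The substitution $u=1-s$ gives $\int_0^t\log\mu_{1-s}(y)\,ds=\int_{1-t}^1\log\mu_u(y)\,du$. Applying Lemma \ref{lemma3.6} to the positive invertible operator $|y^*|$ (note $|y^{-1}|=|y^*|^{-1}$ and $\mu(y)=\mu(y^*)$ by Proposition \ref{proposition 2.2}(1), so that $\mu_s^\ell(y^{-1})=\mu_{1-s}(y)^{-1}$) yields $\log\mu_u(y)=-\log\mu_{1-u}^\ell(y^{-1})$, and a second substitution $v=1-u$ produces
\[
\int_0^t\log\mu_{1-s}(y)\,ds=-\int_0^t\log\mu_s(y^{-1})\,ds=-\int_0^t\log\mu_s(b)\,ds .
\]
Since $x=ab$, the claim is therefore \emph{equivalent} to
\[
\int_K\log\mu_s(ab)\,ds\le\int_K\log\mu_s(a)\,ds+\int_0^t\log\mu_s(b)\,ds,\qquad m(K)=t. \qquad (\dagger)
\]

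The second step is to prove $(\dagger)$, which carries the genuine content. I would first reduce $K$ to a finite union of intervals: because $a,b,ab$ are invertible the integrands $\log\mu_s(a)$ and $\log\mu_s(ab)$ are bounded, so by regularity of Lebesgue measure and dominated convergence it suffices to check $(\dagger)$ when $K$ is a finite union of intervals of total measure $t$ (the term $\int_0^t\log\mu_s(b)\,ds$ depends only on $t$). For such $K$ the estimate is the generalized-singular-number form of the classical Horn inequality $\prod_{i\in K}s_i(AB)\le\prod_{i\in K}s_i(A)\prod_{i=1}^{|K|}s_i(B)$, which I would obtain from submultiplicativity of the generalized singular numbers (Proposition \ref{proposition 2.2}(5)) together with the log-submajorisation of products $\mu(ab)\prec\prec_{\log}\mu(a)\mu(b)$ furnished by the framework of \cite{FK1986, H1987, HSZ2020}, after passing to the diffuse case (tensoring with $L^\infty([0,1])$, exactly as in the proof of Lemma \ref{lemma3.6}) so that spectral projections of every prescribed trace are available.

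The hard part is precisely the passage to an arbitrary index set $K$. Bare submultiplicativity $\mu_{s+r}(ab)\le\mu_s(a)\mu_r(b)$ is too lossy: letting $r\to0$ gives only $\log\mu_s(ab)\le\log\mu_s(a)+\log\|b\|$, hence the far weaker bound $t\log\|b\|$ in place of $\int_0^t\log\mu_s(b)\,ds$. Equivalently, setting $Q(s)=\log\mu_s(ab)-\log\mu_s(a)$, the interval case ($K=[0,u]$) controls only $\int_0^uQ\,ds$, whereas $(\dagger)$ for \emph{all} $K$ of measure $t$ is the stronger statement that the decreasing rearrangement $Q^*$ satisfies $\int_0^tQ^*\,ds\le\int_0^t\log\mu_s(b)\,ds$; this is where the full Horn/exterior-power majorisation mechanism must enter and where the type II$_1$ approximation needs care. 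A convenient consistency check that pins down the extremal case and fixes the direction of $(\dagger)$ is that equality holds at $t=1$, since $\int_0^1\log\mu_s(ab)\,ds=\log\Delta_\tau(ab)=\log\Delta_\tau(a)+\log\Delta_\tau(b)$ by Proposition \ref{proposition 2.4}(1).
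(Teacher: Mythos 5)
Your first step is correct and is a genuinely different reduction from the paper's. The identity $\int_0^t\log\mu_{1-s}(y)\,ds=-\int_0^t\log\mu_s(y^{-1})\,ds$ does follow from Lemma \ref{lemma3.6} applied to $|y^*|$ (using $|y^{-1}|=|y^*|^{-1}$ and $\mu(y)=\mu(y^*)$, with the left/right-continuous versions agreeing a.e.), and it converts the asserted lower bound into the upper-bound form $(\dagger)$ with $a=xy$, $b=y^{-1}$. The paper instead applies the upper-bound inequality to the \emph{complement} $K^c$ and subtracts it from the exact identity $\int_0^1\log\mu_s(xy)\,ds=\int_0^1\log\mu_s(x)\,ds+\int_0^1\log\mu_s(y)\,ds$ (multiplicativity of the Fuglede--Kadison determinant for invertible elements). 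Your route is arguably cleaner: it needs neither the complement trick nor the determinant identity, only the inversion formula for $\mu$.

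The gap is in your second step. The inequality $(\dagger)$ for an \emph{arbitrary} Borel set $K$ of measure $t$ is precisely \cite[Theorem 2]{N1987}, which is the external result the paper's proof cites (applied there to $K^c$). You correctly diagnose that bare submultiplicativity $\mu_{s+r}(ab)\le\mu_s(a)\mu_r(b)$ and the interval log-submajorisation $\mu(ab)\prec\prec_{\log}\mu(a)\mu(b)$ only give the case $K=[0,u]$, but your proposed repair does not close: reducing $K$ to a finite union of intervals does not reduce the difficulty, because the obstruction is the arbitrary index set, not the measurability of $K$, and the ``Horn/exterior-power mechanism'' you invoke is exactly the content of Nakamura's theorem rather than something furnished by \cite{FK1986,H1987,HSZ2020} in the form you need. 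You acknowledge this yourself (``the hard part is precisely the passage to an arbitrary index set $K$''), so as written the proof is incomplete at its crucial step. If you replace that paragraph by a citation of \cite[Theorem 2]{N1987} at $(\dagger)$, your argument becomes a complete and somewhat more economical proof than the paper's; without it, you have only rephrased the lemma as an equivalent known theorem.
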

\begin{proof}
Let $K^c$ denote the set $\{t\in [0, 1]: t\notin E\}$.
Then $m(K^c)=1-t$.
We conclude from \cite[Theorem 2]{N1987} that
\begin{equation}\label{equat. lemma 4.1(1)}
\int_{K^c}\log\mu_s(xy)ds\leq\int_{K^c}\log\mu_s(x)+\int_0^{1-t}\log\mu_s(y)ds.
\end{equation}
Note that $x, y\in\mathcal{M}$ are invertible.
By Proposition \ref{proposition 2.4}(1) and (3) we have $\Delta(x)\neq0, \Delta(y)\neq0$ and
\begin{equation}\label{equat. lemma 4.1(2)}
-\infty<\int_0^1\log(\mu_s(x))ds+\int_{0}^1\log\mu_{s}(y)ds=\int_{0}^1\log\mu_{s}(xy)ds<\infty
\end{equation}
Subtracting (\ref{equat. lemma 4.1(1)}) from (\ref{equat. lemma 4.1(2)}) yields
\[\int_E\log\mu_s(x)ds+\int_{1-t}^1\log\mu_{s}(y)ds\leq
\int_E\log\mu_s(xy)ds,\]
i.e.,
\[\int_E\log\mu_s(x)ds+\int_{0}^t\log\mu_{1-s}(y)ds\leq
\int_E\log\mu_s(xy)ds.\]
\end{proof}

\begin{remark}\label{remark 4.2}
\begin{enumerate}
\item Let $x, y\in\mathcal{M}$ and let
  $K$ be a Borel subset of $[0, 1]$ with $m(K)=t$ (here $m(K)$ denotes the
Lebesgue measure of $K)$. Then
\begin{align*}
\int_K\log\mu_s(x)ds+\int_{0}^t\log\mu_{1-s}(y)ds
\leq
\int_K\log\mu_s(xy)ds.
\end{align*}
Indeed, if $x, y$ are invertible, then it follows from Lemma \ref{lemma 4.1}.
We write $x=u|x|$ and $y=v|y|$ for unitary operators $u, v\in \mathcal{M}$.
Then $z=u|x||y^*|v^*$ and $\mu_t(x)=\mu_t(|x|)$, $\mu_t(y)=\mu_t(|y^*|)$,
$\mu_t(z)=\mu_t(|x||y^*|)$. Thus, we may without loss of generality assume
$x\geq0, y\geq0$ and let
\[z(\epsilon_1, \epsilon_2)=(x+\epsilon_1\mathbb{I})(y+\epsilon_2\mathbb{I}).\]
Note that $\mu_s(x+\epsilon_1\mathbb{I})=\mu_s(x)+\epsilon_1$ and
$\mu_s(y+\epsilon_2\mathbb{I})=\mu_s(y)+\epsilon_2$. From Lemma \ref{lemma 4.1}
we see that
\begin{equation}\label{inequ. remark 4.2}
\begin{split}
&\int_K\log(\mu_s(x)+\epsilon_1)ds+\int_{0}^t\log(\mu_{1-s}(y)+\epsilon_2)ds\\
\leq&
\int_K\log\mu_s(z(\epsilon_1, \epsilon_2))ds.
\end{split}
\end{equation}
Moreover, for any projection operators $e\in \mathcal{M}$, we have
\[\|z(\epsilon_1, \epsilon_2)e\|^2=\|e(y+\epsilon_2\mathbb{I})
(x^2+2\epsilon_1x+\epsilon_1^2\mathbb{I})
(y+\epsilon_2\mathbb{I})e\|,\]
which implies that
$\mu_s(z(\epsilon_1, \epsilon_2))$ is decreasing in $\epsilon_1$.
Similarly,
$\mu_s(z(\epsilon_1, \epsilon_2))$ is decreasing in $\epsilon_2$.
Letting $\epsilon_i\rightarrow0$ and using the monotone convergence
theorem in (\ref{inequ. remark 4.2}), we obtain the desired inequality.
\item Let $x, y\in\mathcal{M}$ and let
 $K$ be a Borel subset of $[0, 1]$ with $m(K)=t$ $(m(K)$ denotes the
Lebesgue measure of $K)$. Combing \cite[Theorem 2]{N1987} with Lemma \ref{lemma 4.1} we can assert that
\begin{align*}
\int_K\log\mu_s(x)ds+\int_{0}^t\log\mu_{1-s}(y)ds
&\leq
\int_K\log\mu_s(xy)ds\\
&\leq\int_K\log\mu_s(x)+\int_0^t\log\mu_s(y)ds.
\end{align*}
In particular, if $K=[0, t]$, then
\begin{align*}
\int_0^t\log\mu_s(x)ds+\int_{0}^t\log\mu_{1-s}(y)ds
&\leq
\int_0^t\log\mu_s(xy)ds\\
&\leq\int_0^t\log\mu_s(x)+\int_0^t\log\mu_s(y)ds.
\end{align*}
\end{enumerate}
\end{remark}

\begin{theorem}\label{theorem 3.10}
Let $x\in \mathcal{M}$ with $\|x\|<1$.
If $K$ is a Borel subset of $[0, 1]$ with $m(K)=t$ $(m(K)$ denotes the
Lebesgue measure of $K)$, then
\begin{align*}
&\int_K\log\mu_{s}((\mathbb{I}-x^*)^{-1}(\mathbb{I}-x^*x)(\mathbb{I}-x)^{-1})ds\\
&\geq
\int_{0}^{t}2\log\frac{1}{1+\mu_{s}(x)}ds+\int_K\log(1-\mu_{1-s}(x)^2)ds, t>0.
\end{align*}
\end{theorem}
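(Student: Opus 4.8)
The plan is to combine the factorisation of Lemma~\ref{lemma 3.8} with the lower product estimate of Remark~\ref{remark 4.2}(1). Since $\|x\|<1$, the operator $\mathbb{I}-x$ is invertible, and Lemma~\ref{lemma 3.8} gives
\[
W:=(\mathbb{I}-x^*)^{-1}(\mathbb{I}-x^*x)(\mathbb{I}-x)^{-1}=S^*S,\qquad S=(\mathbb{I}-x^*x)^{\frac12}(\mathbb{I}-x)^{-1}.
\]
Writing $P=(\mathbb{I}-x^*x)^{\frac12}$ and $Q=(\mathbb{I}-x)^{-1}$, so that $S=PQ$, I first record that $\mu_s(W)=\mu_s(S^*S)=\mu_s(S)^2$ by Proposition~\ref{proposition 2.2}(1)--(2), whence $\int_K\log\mu_s(W)\,ds=2\int_K\log\mu_s(S)\,ds$. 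Applying Remark~\ref{remark 4.2}(1) to the product $S=PQ$ and multiplying by $2$ then yields
\[
\int_K\log\mu_s(W)\,ds\ \ge\ 2\int_K\log\mu_s(P)\,ds+2\int_0^t\log\mu_{1-s}(Q)\,ds,
\]
so it suffices to identify the two terms on the right-hand side with the two summands in the claimed bound.

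For the $P$-term, note $0\le x^*x$ with $\|x^*x\|=\|x\|^2\le1$, so Lemma~\ref{lemma 3.7}(5) gives $\mu_s(\mathbb{I}-x^*x)=1-\mu_{1-s}^\ell(x^*x)=1-\mu_{1-s}^\ell(x)^2$. Since $\mu_s(P)^2=\mu_s((\mathbb{I}-x^*x)^{\frac12})^2=\mu_s(\mathbb{I}-x^*x)$ by Proposition~\ref{proposition 2.2}(2), and since $\mu^\ell=\mu$ almost everywhere, I obtain
\[
2\int_K\log\mu_s(P)\,ds=\int_K\log\bigl(1-\mu_{1-s}^\ell(x)^2\bigr)\,ds=\int_K\log\bigl(1-\mu_{1-s}(x)^2\bigr)\,ds,
\]
which is exactly the second summand.

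For the $Q$-term I need a lower bound on $\mu_{1-s}(Q)=\mu_{1-s}((\mathbb{I}-x)^{-1})$. From $\mu_{s_1+s_2}(\mathbb{I}-x)\le\mu_{s_1}(\mathbb{I})+\mu_{s_2}(x)$ (Proposition~\ref{proposition 2.2}(3)) with $\mu_{s_1}(\mathbb{I})=1$, letting $s_1\downarrow0$ and using right-continuity gives $\mu_s(\mathbb{I}-x)\le1+\mu_s(x)$. The delicate point is inverting: $\mathbb{I}-x$ is invertible but not positive, so Lemma~\ref{lemma3.6} does not apply directly. I would pass through the polar decomposition $\mathbb{I}-x=v\,|\mathbb{I}-x|$ and use the right-unitary invariance $\mu((\mathbb{I}-x)^{-1})=\mu(|\mathbb{I}-x|^{-1}v^*)=\mu(|\mathbb{I}-x|^{-1})$ to reduce to the positive invertible operator $|\mathbb{I}-x|$, to which Lemma~\ref{lemma3.6} applies and yields $\mu_{1-s}^\ell(Q)=\mu_s(\mathbb{I}-x)^{-1}\ge(1+\mu_s(x))^{-1}$. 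Taking logarithms, integrating over $[0,t]$ and using $\mu^\ell=\mu$ a.e.\ gives
\[
2\int_0^t\log\mu_{1-s}(Q)\,ds\ \ge\ \int_0^t 2\log\frac{1}{1+\mu_s(x)}\,ds,
\]
the first summand. Substituting the two terms into the displayed inequality finishes the proof. I expect this reduction of $\mu((\mathbb{I}-x)^{-1})$ to the positive case---combining the polar decomposition, the unitary invariance of $\mu$, and Lemma~\ref{lemma3.6}---to be the main obstacle, as it is the only step not immediately covered by the results quoted above.
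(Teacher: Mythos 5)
Your argument is correct and essentially the paper's own: both rest on the product inequality of Lemma~\ref{lemma 4.1}/Remark~\ref{remark 4.2} combined with Lemmas~\ref{lemma3.6} and~\ref{lemma 3.7}, the only difference being that you apply the product lemma once to $S=PQ$ and double via $\mu_s(S^*S)=\mu_s(S)^2$, whereas the paper applies Lemma~\ref{lemma 4.1} twice to the three-factor product $(\mathbb{I}-x^*)^{-1}(\mathbb{I}-x^*x)(\mathbb{I}-x)^{-1}$. Your explicit reduction of $\mu\bigl((\mathbb{I}-x)^{-1}\bigr)$ to the positive case via the polar decomposition and right-unitary invariance is sound, and it supplies a detail the paper passes over silently when it invokes Lemma~\ref{lemma3.6} for the non-positive operator $\mathbb{I}-x$.
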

\begin{proof}
For convenience, we write $A:=(\mathbb{I}-x^*)^{-1}(\mathbb{I}-x^*x)(\mathbb{I}-x)^{-1}$.
Since $\|x\|<1$, $A$ is invertible, hence that $\Delta(A)>0.$
Therefore, $\int_0^{1}\log\mu_{s}(A)ds>-\infty$.
Using Lemma \ref{lemma 4.1} twice, we have
\[\int_{K}\log\mu_{s}(A)ds\geq
\int_{0}^{t}2\log\mu_{1-s}((\mathbb{I}-x)^{-1})ds+\int_K\log\mu_{s}(\mathbb{I}-x^*x) ds.\]
It follows from Lemma \ref{lemma 3.7}(3)-(5) and Lemma \ref{lemma3.6} that
\begin{align*}
\int_{K}\log\mu_{s}(A)ds&\geq
\int_{0}^{t}2\log\mu_{1-s}((\mathbb{I}-x)^{-1})ds+\int_K\log\mu_{s}(\mathbb{I}-x^*x) ds\\
&=\int_{0}^{t}2\log\frac{1}{\mu_{s}^\ell(\mathbb{I}-x)}ds+\int_K\log\mu_{s}(\mathbb{I}-x^*x) ds\\
&\geq\int_{0}^{t}2\log\frac{1}{1+\mu_{s}^\ell(x)}ds+\int_K\log(1-\mu_{1-s}^\ell(x)^2)ds\\
&=\int_{0}^{t}2\log\frac{1}{1+\mu_{s}(x)}ds+\int_K\log(1-\mu_{1-s}(x)^2)ds,
\end{align*}
because $\mu_{s}^\ell(x)=\mu_{s}(x)$ holds for almost every $t\in[0, 1].$
\end{proof}

\begin{corollary}\label{corollary 3.10}
Let $x\in \mathcal{M}$ with $\|x\|<1$. Then
\[\int_0^t\log\mu_{1-s}((\mathbb{I}-x^*)^{-1}(\mathbb{I}-x^*x)(\mathbb{I}-x)^{-1})ds\geq
\int_0^t\log\frac{1-\mu_s(x)}{1+\mu_s(x)}ds, ~t>0.\]
In particular,
\begin{align*}
\frac{\Delta_{\tau}(\mathbb{I}-x^*x)}{\Delta_{\tau}(\mathbb{I}-x)^2}
\geq\exp\int_0^1\log\frac{1-\mu_s(x)}{1+\mu_s(x)}ds.
\end{align*}
\end{corollary}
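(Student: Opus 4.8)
The plan is to read off this statement from Theorem \ref{theorem 3.10} by a judicious choice of the Borel set $K$ together with a single reflection in the variable. Writing $A := (\mathbb{I}-x^*)^{-1}(\mathbb{I}-x^*x)(\mathbb{I}-x)^{-1}$ as in the proof of Theorem \ref{theorem 3.10}, I would apply that theorem to the particular set $K = [1-t, 1]$, which has Lebesgue measure $m(K) = t$. The key observation is that the left-hand side $\int_K \log \mu_s(A)\,ds$ then becomes $\int_{1-t}^1 \log \mu_s(A)\,ds$, and the substitution $s \mapsto 1-s$ converts this into $\int_0^t \log \mu_{1-s}(A)\,ds$, precisely the quantity appearing in the corollary.

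It remains to simplify the right-hand side of Theorem \ref{theorem 3.10} under the same choice $K = [1-t,1]$. The first term $\int_0^t 2\log \frac{1}{1+\mu_s(x)}\,ds$ is already in final form, while for the second term the substitution $s \mapsto 1-s$ gives $\int_{1-t}^1 \log(1-\mu_{1-s}(x)^2)\,ds = \int_0^t \log(1-\mu_s(x)^2)\,ds$. Factoring $1-\mu_s(x)^2 = (1-\mu_s(x))(1+\mu_s(x))$ and combining the two integrands, I obtain
\[
-2\log(1+\mu_s(x)) + \log(1-\mu_s(x)) + \log(1+\mu_s(x)) = \log \frac{1-\mu_s(x)}{1+\mu_s(x)},
\]
which yields the displayed inequality for every $t > 0$.

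For the determinant statement I would specialise to $t = 1$. Since $\|x\|<1$, the operator $A$ is invertible by Lemma \ref{lemma 3.8}, so $\Delta_\tau(A) = \Lambda_1(A) = \exp \int_0^1 \log \mu_s(A)\,ds$, and the reflection $s \mapsto 1-s$ identifies the left-hand side at $t=1$ with $\log \Delta_\tau(A)$. Exponentiating the main inequality then gives $\Delta_\tau(A) \geq \exp \int_0^1 \log \frac{1-\mu_s(x)}{1+\mu_s(x)}\,ds$. Finally, using Lemma \ref{lemma 3.8} to write $A$ as the indicated product and applying the multiplicativity, adjoint-invariance, and inversion rules of Proposition \ref{proposition 2.4}(1)--(3), I compute
\[
\Delta_\tau(A) = \frac{\Delta_\tau(\mathbb{I}-x^*x)}{\Delta_\tau(\mathbb{I}-x)\,\Delta_\tau(\mathbb{I}-x^*)} = \frac{\Delta_\tau(\mathbb{I}-x^*x)}{\Delta_\tau(\mathbb{I}-x)^2},
\]
which delivers the asserted bound.

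There is no serious analytic obstacle: the entire argument is a specialisation of Theorem \ref{theorem 3.10} followed by an algebraic identity and an appeal to determinant properties already recorded. The only point requiring genuine care is the bookkeeping of the reflection $s \mapsto 1-s$, which must be applied consistently so that the $\mu_{1-s}(A)$ on the left and the $\mu_{1-s}(x)^2$ inside the right-hand integral of Theorem \ref{theorem 3.10} align correctly; once $K = [1-t,1]$ is chosen, the rest is routine.
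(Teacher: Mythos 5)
Your argument is correct and is essentially identical to the paper's own proof: the paper likewise takes $K=[1-t,1]$ in Theorem \ref{theorem 3.10}, performs the reflection $s\mapsto 1-s$ in both integrals, combines the logarithms into $\log\frac{1-\mu_s(x)}{1+\mu_s(x)}$, and then passes to $t=1$ together with Proposition \ref{proposition 2.4} to get the determinant bound. (Incidentally, your final display is the correct one; the paper's last line contains a typo with the fraction inverted.)
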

\begin{proof}
Replacing $K$ by $[1-t, 1]$, in Theorem \ref{theorem 3.10} we have
\begin{align*}
\int_{0}^{t}\log\mu_{1-s}(A)ds&=\int_{1-t}^{1}\log\mu_{s}(A)ds\\
&\geq
\int_{0}^{t}2\log\frac{1}{1+\mu_{s}(x)}ds+\int_{1-t}^1\log(1-\mu_{1-s}(x)^2)ds\\
&=\int_{0}^{t}2\log\frac{1}{1+\mu_{s}(x)}ds+\int_0^t\log(1-\mu_{s}(x)^2) ds\\
&=\int_{0}^{t}\log\frac{1-\mu_{s}(x)}{1+\mu_{s}(x)}ds.
\end{align*}
Therefore, letting $t\rightarrow1$ yields
\begin{align*}
\int_{0}^{1}\log\mu_{s}(A)ds&=\int_{0}^{1}\log\mu_{1-s}(A)ds
\geq\int_{0}^{1} \log\frac{1+\mu_{s}(x)}{1-\mu_{s}(x)}ds.
\end{align*}
This completes the proof.
\end{proof}

\begin{theorem}\label{theorem 3.11}
Let $0\leq x_i\in \mathcal{M}$ with $\|x_i\|<1, i=1, 2,\cdots, n$.
Then for any
unitary operator $u\in\mathcal{M}$ and positive scalars
$\omega_i, i=1, 2,\cdots, n,\sum^n_i\omega_i=1$, we have
\begin{align*}
\prod_{i=1}^n[\exp\int_0^1\log\frac{1-\mu_t(x_i)}{1+\mu_t(x_i)}dt]^{\omega_i}
\leq\frac{\Delta_\tau(\mathbb{I}-W^2)}{\Delta_\tau(\mathbb{I}-uW)^2}
\leq\prod_{i=1}^n[\exp\int_0^1\log\frac{1+\mu_t(x_i)}{1-\mu_t(x_i)}dt]^{\omega_i},
\end{align*}
where $W=\sum_{i=1}^n\omega_i x_i.$
\end{theorem}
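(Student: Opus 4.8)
The plan is to reduce the two-sided estimate, by the substitution $A=uW$, to the single-operator bounds already established in Theorem \ref{theorem 3.9} and Corollary \ref{corollary 3.10}, and then to absorb the convex combination $W=\sum_i\omega_i x_i$ by a convexity argument for the trace. First I would record the elementary facts about $W$. Since each $x_i\geq 0$ and $\sum_i\omega_i=1$ with $\omega_i>0$, the operator $W$ is positive and $\|W\|\leq\sum_i\omega_i\|x_i\|<1$; in particular $\mathbb{I}-W$ and $\mathbb{I}-W^2$ are positive and invertible, so all determinants below are positive and finite. Setting $A=uW$ with $u$ unitary, I would note that $\|A\|=\|W\|<1$, that $A^*A=Wu^*uW=W^2$ whence $\mathbb{I}-A^*A=\mathbb{I}-W^2$, and that $|A|=(Wu^*uW)^{1/2}=W$, so $\mu_t(A)=\mu_t(W)$ by Proposition \ref{proposition 2.2}(1). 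Applying Theorem \ref{theorem 3.9} and Corollary \ref{corollary 3.10} to $A$ then yields
\[
\exp\int_0^1\log\frac{1-\mu_t(W)}{1+\mu_t(W)}dt\leq\frac{\Delta_\tau(\mathbb{I}-W^2)}{\Delta_\tau(\mathbb{I}-uW)^2}\leq\exp\int_0^1\log\frac{1+\mu_t(W)}{1-\mu_t(W)}dt,
\]
so that the $u$-dependent middle term is already trapped between two $u$-independent quantities.

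Next I would identify these bounds with Fuglede--Kadison determinants of $W$. Using $\mu_t(\mathbb{I}+W)=1+\mu_t(W)$ and, from Lemma \ref{lemma 3.7}(5), $\mu_t(\mathbb{I}-W)=1-\mu_{1-t}^\ell(W)$ together with $\mu^\ell=\mu$ a.e., one gets
\[
\exp\int_0^1\log\frac{1+\mu_t(W)}{1-\mu_t(W)}dt=\frac{\Delta_\tau(\mathbb{I}+W)}{\Delta_\tau(\mathbb{I}-W)},
\]
and likewise the left-hand bound equals the reciprocal $\Delta_\tau(\mathbb{I}-W)/\Delta_\tau(\mathbb{I}+W)$. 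The same identities applied to each $x_i$ rewrite the product bounds in the statement. Consequently both the upper and the lower estimate of the theorem follow at once from the \emph{single} inequality
\[
\frac{\Delta_\tau(\mathbb{I}+W)}{\Delta_\tau(\mathbb{I}-W)}\leq\prod_{i=1}^n\Big[\frac{\Delta_\tau(\mathbb{I}+x_i)}{\Delta_\tau(\mathbb{I}-x_i)}\Big]^{\omega_i},
\]
the lower estimate being obtained by taking reciprocals of this inequality.

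Finally I would prove the displayed inequality. Writing $\log\Delta_\tau(y)=\tau(\log y)$ for positive invertible $y$, and using that $\mathbb{I}\pm W$ are commuting functions of $W$, the logarithm of the left-hand side equals $\tau(h(W))$ and that of the right-hand side equals $\sum_i\omega_i\tau(h(x_i))$, where $h(t)=\log\frac{1+t}{1-t}$. Thus the claim is exactly $\tau(h(W))\leq\sum_i\omega_i\tau(h(x_i))$ with $W=\sum_i\omega_i x_i$. Since $h''(t)=\frac{4t}{(1-t^2)^2}\geq 0$, the function $h$ is convex on $[0,1)$, and every partial convex combination of the $x_i$ again has norm $<1$, hence spectrum inside $[0,1)$.

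I expect this last step to be the main obstacle: it rests on the convexity of the trace functional $X\mapsto\tau(\phi(X))$ on self-adjoint operators with spectrum in the domain of a convex $\phi$ (Jensen's trace inequality). This is standard but genuinely nontrivial — it is the one place where one uses more than the majorisation machinery developed earlier in the paper — and I would invoke it from the literature (it follows from the spectral theorem together with Klein's inequality) rather than reprove it, applying it to $\phi=h$ and the convex combination $W=\sum_i\omega_i x_i$ to conclude.
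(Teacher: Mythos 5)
Your proposal is correct, and its first half (the reduction via $A=uW$, $A^*A=W^2$, $\mu_t(A)=\mu_t(W)$, followed by Theorem \ref{theorem 3.9} and Corollary \ref{corollary 3.10}) is exactly the paper's opening move. Where you diverge is in handling the convex combination $W=\sum_i\omega_i x_i$. The paper stays at the level of singular value functions: it invokes the submajorisation $\int_0^t\mu_s(W)\,ds\leq\int_0^t\sum_i\omega_i\mu_s(x_i)\,ds$ from Fack--Kosaki, pushes it through the increasing convex function $f(t)=\log\frac{1+t}{1-t}$, and then applies Lewent's scalar inequality pointwise in $s$. You instead rewrite both sides as Fuglede--Kadison determinants, $\exp\int_0^1\log\frac{1+\mu_t(W)}{1-\mu_t(W)}\,dt=\Delta_\tau(\mathbb{I}+W)/\Delta_\tau(\mathbb{I}-W)$, and reduce everything to the single inequality $\tau(h(W))\leq\sum_i\omega_i\tau(h(x_i))$ for $h(t)=\log\frac{1+t}{1-t}$, which you obtain from the operator Jensen trace inequality. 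Both routes are valid, and in fact they rest on the same convexity: Lewent's inequality \emph{is} the scalar Jensen inequality for $h$, so the paper applies Jensen to the functions $s\mapsto\mu_s(x_i)$ while you apply it to the operators themselves. Your version is conceptually cleaner (it isolates the $u$-dependence entirely in Theorem \ref{theorem 3.9}/Corollary \ref{corollary 3.10} and exhibits the remaining bound as pure convexity of $X\mapsto\tau(h(X))$), but it imports a nontrivial external result (Klein/Peierls--type trace convexity in a finite von Neumann algebra) that the paper avoids by staying inside its own majorisation toolkit; note that for positive operators and an increasing convex $h$ with $h(0)=0$ your trace-Jensen step can itself be deduced from the very Fack--Kosaki submajorisation the paper uses, so nothing essentially new is needed.
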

\begin{proof}
An easy calculation shows that $1-W^2$ and $1-uW$ are invertible and $W\geq0$ with $\|W\|<1$.
Theorem \ref{theorem 3.9} and Corollary \ref{corollary 3.10} tell us that
\begin{equation}\label{inequ. theorem 3.11-1}
\exp\int_0^1\log\frac{1-\mu_t(x)}{1+\mu_t(x)}dt\leq
\frac{\Delta(\mathbb{I}-x^*x)}{\Delta(\mathbb{I}-x)^2}
\leq\exp\int_0^1\log\frac{1+\mu_t(x)}{1-\mu_t(x)}dt.
\end{equation}
Note that \cite[Theorem 4.4]{FK1986} tells us that
\[\int_0^t\mu_s(W)ds\leq \int_0^t\sum_{i=1}^n\omega_i\mu_s(x_i)ds.\]
The rest of the proof run as \cite[Theorem 5]{LZ2017}. For the convenience of the reader, we add a proof.
 Indeed, the convexity and the monotonicity of the function $f(t)=\log\frac{1+t}{1-t}, 0\leq t<1$ mean that
\[\int_0^tf(\mu_s(W))ds\leq \int_0^tf(\sum_{i=1}^n\omega_i\mu_s(x_i))ds.\]
On the other hand, by Lewent's inequality( \cite{LZ2017, L2013}), we obtain
\[\frac{1+\sum_{i=1}^n\omega_i\mu_s(x_i)}{1-\sum_{i=1}^n\omega_i\mu_s(x_i)}
\leq\prod_{i=1}^n(\frac{1+\mu_s(x_i)}{1-\mu_s(x_i)})^{\omega_i}.\]
Thus
\[\int_0^tf(\mu_s(W))ds\leq\int_0^t\log\prod_{i=1}^n(\frac{1+\mu_s(x_i)}{1-\mu_s(x_i)})^{\omega_i}ds
=\sum_{i=1}^n\omega_i\int_0^t\log(\frac{1+\mu_s(x_i)}{1-\mu_s(x_i)})ds.\]
It follows that
\begin{equation}\label{inequ. theorem 3.11-2}\exp\{\int_0^t\log(\frac{1+\mu_s(W)}{1-\mu_s(W)})ds\}\leq
\prod_{i=1}^n[\exp\int_0^1\log\frac{1+\mu_t(x_i)}{1-\mu_t(x_i)}dt]^{\omega_i}.
\end{equation}
Moreover, the inequalities in (\ref{inequ. theorem 3.11-2}) reverse by taking reciprocals, which implies
\begin{equation}\label{inequ. theorem 3.11-3}\exp\{\int_0^t\log(\frac{1-\mu_s(W)}{1+\mu_s(W)})ds\}\geq
\prod_{i=1}^n[\exp\int_0^1\log\frac{1-\mu_t(x_i)}{1+\mu_t(x_i)}dt]^{\omega_i}.
\end{equation}
Combining (\ref{inequ. theorem 3.11-1}) with (\ref{inequ. theorem 3.11-2}) and
(\ref{inequ. theorem 3.11-3}) yields
\begin{align*}
\prod_{i=1}^n[\exp\int_0^1\log\frac{1-\mu_t(x_i)}{1+\mu_t(x_i)}dt]^{\omega_i}
\leq\frac{\Delta_\tau(\mathbb{I}-W^2)}{\Delta_\tau(\mathbb{I}-uW)^2}
\leq\prod_{i=1}^n[\exp\int_0^1\log\frac{1+\mu_t(x_i)}{1-\mu_t(x_i)}dt]^{\omega_i}.
\end{align*}
\end{proof}

\section{Cayley transform with logarithmic submajorisation}
In this section, we will consider some logarithmic submajorisation
inequalities related to Cayley transform.
We will extend some results of Yang-\,-Zhang\cite{YZ2020}
to the case of finite von Neumann algebra.

Let $x\in\mathcal{M}$. If $x+i\mathbb{I}$ is invertible, we call
$\mathcal{C}(x)=(x-i\mathbb{I})(x+i\mathbb{I})^{-1}$ the Cayley transform of $x$.

\begin{theorem}\label{theorem 5.3}
Let $x, y\in\mathcal{M}$ with $\|x\|<1, \|y\|<1$ and let $\mathcal{C}(x)$ and
$\mathcal{C}(y)$ be the Cayley transforms of $x$ and $y$, respectively.
If $K$ is a Borel subset of $[0, 1]$ with $m(K)=t$ $(m(K)$ denotes the
Lebesgue measure of $K)$, then
\begin{align*}
&\int_K\log(1-\mu_{1-s}(x))ds-\int_0^t\log(1+\mu_{s}(x))ds\\
&\leq
\int_K\log\mu_s(\mathcal{C}(x))ds\\
&\leq\int_K\log(1+\mu_s(x))-\int_0^t\log(1-\mu_s(x))ds
\end{align*}
and
\begin{align*}
&\int_E\log\mu_s(\mathcal{C}(x)-\mathcal{C}(y))ds\\
\leq&
\int_E\log2\mu_s(x-y)ds-\int_0^t\log[(1-\mu_{s}(x))(1-\mu_{s}(y))]ds.
\end{align*}
\end{theorem}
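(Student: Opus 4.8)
The plan is to reduce both inequalities to the factorisations of the Cayley transform, fed into the multiplicative estimates of Remark~\ref{remark 4.2} together with the single-factor bounds recorded in Lemma~\ref{lemma 3.7}(4) and Lemma~\ref{lemma3.6}. Since $\|x\|<1$ and $\|y\|<1$, the spectra of $x$ and $y$ lie in a disc of radius strictly less than $1$, so $\pm i$ lie in the resolvent set; hence $x\pm i\mathbb{I}$ and $y+i\mathbb{I}$ are invertible and $\mathcal{C}(x),\mathcal{C}(y)$ are well defined. First I would record the elementary single-factor facts that drive everything: from $\|(x\pm i\mathbb{I})e\|\le\|xe\|+1$ one gets $\mu_s(x\pm i\mathbb{I})\le 1+\mu_s(x)$, while Lemma~\ref{lemma 3.7}(4) supplies the matching lower bounds $\mu_s(x-i\mathbb{I})\ge 1-\mu_{1-s}(x)$ and $\mu_{1-s}(x+i\mathbb{I})\ge 1-\mu_{s}(x)$ almost everywhere. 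Combining $\mu_s(z^{-1})=\mu_s(|z^*|^{-1})$ with Lemma~\ref{lemma3.6} applied to the positive operator $|z^*|$ then yields $\mu_s((x+i\mathbb{I})^{-1})\le (1-\mu_s(x))^{-1}$ and $\mu_{1-s}((x+i\mathbb{I})^{-1})\ge (1+\mu_s(x))^{-1}$ a.e., and likewise for $y$.

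For the first (two-sided) inequality I would write $\mathcal{C}(x)=(x-i\mathbb{I})(x+i\mathbb{I})^{-1}$ and apply Remark~\ref{remark 4.2}. For the upper bound, the upper estimate in Remark~\ref{remark 4.2}(2) with first factor $x-i\mathbb{I}$ and second factor $(x+i\mathbb{I})^{-1}$ gives $\int_K\log\mu_s(\mathcal{C}(x))ds\le\int_K\log\mu_s(x-i\mathbb{I})ds+\int_0^t\log\mu_s((x+i\mathbb{I})^{-1})ds$; inserting $\mu_s(x-i\mathbb{I})\le1+\mu_s(x)$ and $\mu_s((x+i\mathbb{I})^{-1})\le(1-\mu_s(x))^{-1}$ produces exactly the claimed right-hand side. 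For the lower bound, Remark~\ref{remark 4.2}(1) gives $\int_K\log\mu_s(x-i\mathbb{I})ds+\int_0^t\log\mu_{1-s}((x+i\mathbb{I})^{-1})ds\le\int_K\log\mu_s(\mathcal{C}(x))ds$; inserting $\mu_s(x-i\mathbb{I})\ge1-\mu_{1-s}(x)$ and $\mu_{1-s}((x+i\mathbb{I})^{-1})\ge(1+\mu_s(x))^{-1}$ yields the claimed left-hand side, monotonicity of $\log$ and $\mu_s=\mu_s^\ell$ a.e.\ being used throughout.

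For the second inequality the first step is the resolvent computation $\mathcal{C}(x)=\mathbb{I}-2i(x+i\mathbb{I})^{-1}$, whence
\[
\mathcal{C}(x)-\mathcal{C}(y)=-2i\big[(x+i\mathbb{I})^{-1}-(y+i\mathbb{I})^{-1}\big]=2i(x+i\mathbb{I})^{-1}(x-y)(y+i\mathbb{I})^{-1}.
\]
Writing $P=(x+i\mathbb{I})^{-1}$ and $Q=(y+i\mathbb{I})^{-1}$, so that $\mu_s(\mathcal{C}(x)-\mathcal{C}(y))=2\mu_s(P(x-y)Q)$, I would estimate the triple product in two steps. Grouping as $P\cdot[2i(x-y)Q]$ and applying the adjoint form of the upper bound in Remark~\ref{remark 4.2}(2)—legitimate since $\mu_s(uv)=\mu_s(v^*u^*)$ by Proposition~\ref{proposition 2.2}(1)—places $P$ on $[0,t]$, giving $\int_E\log\mu_s(\mathcal{C}(x)-\mathcal{C}(y))ds\le\int_0^t\log\mu_s(P)ds+\int_E\log\mu_s(2i(x-y)Q)ds$. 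A second, ordinary application of Remark~\ref{remark 4.2}(2) to $[2i(x-y)]\cdot Q$ then places $2i(x-y)$ on $E$ and $Q$ on $[0,t]$. Finally, inserting $\int_0^t\log\mu_s(P)ds\le-\int_0^t\log(1-\mu_s(x))ds$ and its analogue for $Q$ (both from the first paragraph) and collecting the two $[0,t]$-integrals into $-\int_0^t\log[(1-\mu_s(x))(1-\mu_s(y))]ds$ delivers the assertion.

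The main obstacle is bookkeeping rather than any deep estimate: Remark~\ref{remark 4.2} always forces the \emph{first} factor of a product onto $E$ and the remaining factor onto $[0,t]$, so to land the middle factor $x-y$ on $E$ while both resolvents go onto $[0,t]$ I must alternate between the inequality and its adjoint form, and track the $\mu$ versus $\mu^\ell$ subscripts produced by Lemma~\ref{lemma3.6}. The one point genuinely needing care is the inverse relation for the non-self-adjoint operators $x\pm i\mathbb{I}$, which I handle by passing to $|(x+i\mathbb{I})^*|$ before invoking Lemma~\ref{lemma3.6}; since every bound is used inside an integral, the a.e.\ identity $\mu_s=\mu_s^\ell$ lets me ignore the endpoint discrepancies between $\mu$ and $\mu^\ell$.
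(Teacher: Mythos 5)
Your proof is correct and follows essentially the same route as the paper: factor $\mathcal{C}(x)=(x-i\mathbb{I})(x+i\mathbb{I})^{-1}$ and $\mathcal{C}(x)-\mathcal{C}(y)$ as products of resolvents, apply the product inequalities of Remark~\ref{remark 4.2} (and their adjoint forms), and control the individual factors via Lemma~\ref{lemma 3.7}(4) and Lemma~\ref{lemma3.6}. If anything you are slightly more careful than the paper: your direct use of Remark~\ref{remark 4.2}(1) lands exactly on the stated lower bound $\int_K\log(1-\mu_{1-s}(x))ds-\int_0^t\log(1+\mu_{s}(x))ds$ (the paper's adjoint variant derives a permuted expression), and you spell out the alternation between the inequality and its adjoint form needed to place the middle factor $x-y$ on $E$ in the triple product, a step the paper asserts without comment.
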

\begin{proof}
Let us first compute the upper bounds. Remark \ref{remark 4.2} shows that
\begin{align*}
\int_K\log\mu_s(\mathcal{C}(x))ds&=\int_K\log\mu_s((x-i\mathbb{I})(x+i\mathbb{I})^{-1})ds\\
&\leq\int_K\log\mu_s(x-i\mathbb{I})ds
+\int_0^t\log\mu_s((x+i\mathbb{I})^{-1})ds.
\end{align*}
Together with Lemma \ref{lemma 3.7} this gives
\begin{align*}
\int_0^t\log\mu_s((x+i\mathbb{I})^{-1})ds&\leq \int_0^t\log[\mu_{1-s}^\ell(x+i\mathbb{I})]^{-1}ds\\
&\leq\int_0^t\log(1-\mu_{s}(x))^{-1}ds\\
&=-\int_0^t\log(1-\mu_{s}(x))ds.
\end{align*}
Thus
\begin{align*}
\int_K\log\mu_s(\mathcal{C}(x))ds
\leq\int_K\log(1+\mu_s(x))-\int_0^t\log(1-\mu_s(x))ds
\end{align*}

The lower bound follows easily by using Remark \ref{remark 4.2}.
Indeed,
from Remark \ref{remark 4.2} we obtain
\begin{align*}
\int_E\log\mu_s(\mathcal{C}(x))ds&\geq
\int_0^t\log\mu_{1-s}(x-i\mathbb{I})ds+\int_E\log\mu_{s}((x+i\mathbb{I})^{-1})ds\\
&\geq\int_0^t\log(1-\mu_{s}^\ell(x))ds-\int_E\log\mu_{1-s}^\ell(x+i\mathbb{I})ds\\
&\geq\int_0^t\log(1-\mu_{s}^\ell(x))ds-\int_E\log(1+\mu_{s}^\ell(x))ds\\
&=\int_0^t\log(1-\mu_{s}(x))ds-\int_E\log(1+\mu_{s}(x))ds.
\end{align*}

For the second part, an easy calculation shows that
$\mathcal{C}(x)=1-2i(x+i\mathbb{I})^{-1}$ and
\[
\mathcal{C}(x)-\mathcal{C}(y)=2i(y+i\mathbb{I})^{-1}(x-y)(x+i\mathbb{I})^{-1}.
\]
Hence, Remark \ref{remark 4.2} implies that
\begin{align*}
\int_E\log\mu_s(\mathcal{C}(x)-\mathcal{C}(y))ds=&
\int_E\log2\mu_s((y+i\mathbb{I})^{-1}(x-y)(x+i\mathbb{I})^{-1})ds\\
\leq& \int_0^t\log\mu_s((y+i\mathbb{I})^{-1})ds+\int_E\log2\mu_s(x-y)ds\\
&+\int_0^t\log\mu_s((x+i\mathbb{I})^{-1})ds\\
\leq& \int_0^t\log[\mu_{1-s}^\ell(y+i\mathbb{I})]^{-1} ds+\int_E\log2\mu_s(x-y)ds\\
&+\int_0^t\log[\mu_{1-s}^\ell(x+i\mathbb{I})]^{-1}ds\\
\leq& \int_0^t\log[1-\mu_{s}(y)]^{-1} ds+\int_E\log2\mu_s(x-y)ds\\
&+\int_0^t\log[1-\mu_{s}(x)]^{-1}ds\\
=&\int_E\log2\mu_s(x-y)ds\\
&-\int_0^t\log[(1-\mu_{s}(x))(1-\mu_{s}(y))]ds.
\end{align*}
\end{proof}

If we replace $K$ by $[0, 1]$, in Theorem \ref{theorem 5.3} we have the following corollary.
\begin{corollary}
Let $x, y\in\mathcal{M}$ with $\|x\|<1, \|y\|<1$ and let $\mathcal{C}(x)$ and
$\mathcal{C}(y)$ be the Cayley transforms of $x$ and $y$, respectively.
Then
\begin{align*}
 \int_0^1\log\frac{1-\mu_{1-s}(x)}{1+\mu_{s}(x)}ds
 \leq
\int_0^1\log\mu_s(\mathcal{C}(x))ds \leq\int_0^1\log\frac{1+\mu_s(x)}{1-\mu_s(x)}ds
\end{align*}
and
\begin{align*}
\int_0^1\log\mu_s(\mathcal{C}(x)-\mathcal{C}(y))ds\leq
\int_0^1\log\frac{2\mu_s(x-y)}{(1-\mu_{s}(x))(1-\mu_{s}(y))}ds.
\end{align*}
\end{corollary}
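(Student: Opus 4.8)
The plan is to specialize Theorem~\ref{theorem 5.3} to the case $K=[0,1]$, for which $t=m(K)=1$, and then to consolidate the resulting integrals into single integrals by linearity.

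First I would set $K=[0,1]$ in the first chain of inequalities of Theorem~\ref{theorem 5.3}. Since $t=1$, every auxiliary integral $\int_0^t(\cdots)\,ds$ becomes $\int_0^1(\cdots)\,ds$, so it and the terms $\int_K(\cdots)\,ds=\int_0^1(\cdots)\,ds$ are all carried over the common domain $[0,1]$. The upper bound then reads
\[
\int_0^1\log(1+\mu_s(x))\,ds-\int_0^1\log(1-\mu_s(x))\,ds=\int_0^1\log\frac{1+\mu_s(x)}{1-\mu_s(x)}\,ds,
\]
and the lower bound reads
\[
\int_0^1\log(1-\mu_{1-s}(x))\,ds-\int_0^1\log(1+\mu_s(x))\,ds=\int_0^1\log\frac{1-\mu_{1-s}(x)}{1+\mu_s(x)}\,ds,
\]
which are exactly the two outer terms in the first displayed inequality of the corollary. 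For the second statement I would similarly put $K=[0,1]$ (again $t=1$) in the final inequality of Theorem~\ref{theorem 5.3}, obtaining
\[
\int_0^1\log\mu_s(\mathcal{C}(x)-\mathcal{C}(y))\,ds\leq\int_0^1\log 2\mu_s(x-y)\,ds-\int_0^1\log[(1-\mu_s(x))(1-\mu_s(y))]\,ds,
\]
and then fold the two integrals on the right into $\int_0^1\log\frac{2\mu_s(x-y)}{(1-\mu_s(x))(1-\mu_s(y))}\,ds$.

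The only point requiring care is the legitimacy of combining the integrals, namely avoiding an indeterminate $\infty-\infty$. Because $\|x\|,\|y\|<1$, each of $\mathbb{I}-x$, $x+i\mathbb{I}$, $y+i\mathbb{I}$ and $\mathcal{C}(x)$ is invertible in $\mathcal{M}$, so by Proposition~\ref{proposition 2.4}(1),(3) their Fuglede--Kadison determinants are nonzero and each integral $\int_0^1\log\mu_s(\cdot)\,ds$ is finite. With all terms finite, linearity of the integral together with $\log a-\log b=\log(a/b)$ and $\log a+\log b=\log(ab)$ justifies the consolidation, so no genuine obstacle arises beyond this bookkeeping.
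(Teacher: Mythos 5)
Your proposal is correct and matches the paper's own (one-line) proof exactly: the corollary is obtained by taking $K=[0,1]$, hence $t=1$, in Theorem~\ref{theorem 5.3} and combining the resulting integrals. Your added remark on finiteness of each $\int_0^1\log\mu_s(\cdot)\,ds$ (so that no $\infty-\infty$ occurs when consolidating) is a small piece of bookkeeping the paper leaves implicit.
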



\section*{ Acknowledgments}

The first author wishes to express his thanks to Dr.Minghua Lin for suggesting the problem
and for many stimulating conversations.

\section*{ Funding}
This research was partially supported by the National Natural Science Foundation of
China No. 11761067 and National Natural Science Foundation of China No. 11801486.

\section*{Availability of data and materials}

Not applicable.

\section*{ Competing interests}
The author declares that there is no conflict of interests regarding the publication of this paper.

\section*{ Authors's contributions }
Both authors contributed equally and significantly in writing this article.
Both authors read and approved the final manuscript.


\bibliographystyle{amsplain}

\begin{thebibliography}{99}


\bibitem{A1967}
Arveson, W. B.:
\textit{Analyticity in operator algebras.}
 Amer. J. Math. 89, 578-642(1967)

\bibitem{BL2008}
Blecher, D. P., Labuschagne, L. E.:
\textit{Applications of the Fuglede-Kadison determinant: Szeg\"{o}'s
theorem and outers for noncommutative $H^p$.}
Trans. Amer. Math. Soc. 360, 6131-6147(2008)

\bibitem{BR2014}
Bekjan, T. N., Raikhan, M.:
\textit{An Hadamard-type inequality}.
Linear Algebra and its Application,
443, 228-234 (2014)




\bibitem{B1983}
Brown, L. G.:
\textit{Lidskii theorem in the type II case,
Geometric methods in operator algebras}.
(Kyoto, 1983), 1-35, Pitman Res. Notes Math. Ser.,
123, Longman Sci. Tech., Harlow, (1986)


\bibitem{DD1992}
Dodds, P. G., Dodds,T. K.-Y.:
\textit{Unitary approximation and submajorization.}
 Proc. Centre Math. Appl. Austral. Nat. Univ.,
  Austral. Nat. Univ., Canberra,  29, 42-57, (1992)

\bibitem{DPS2019}
Dodds, P. G., de Pagter, B., Sukochev, F.:
\textit{Theory of noncommutative integration.} unpublished
manuscript.

\bibitem{DDSZ2020}
 Dodds, P. G.,  Dodds, T. K., Sukochev, F. A., Zanin,  D.:
\textit{Logarithmic submajorization,
uniform majorization and H\"{o}lder type inequalities for
$\tau$-measurable operators,} Indag. Math.
http://doi.org/10.1016/j.indag.2020.02.004,(2020)(in press)

\bibitem{F1983}
Fack, T.:
 \textit{Proof of the conjecture of A. Grothendieck
 on the Fuglede-Kadison determinant.}
 J. Funct. Anal. 50, 215-228(1983)


\bibitem{FK1986}
 Fack,  T.,  Kosaki, H.:
\textit{Generalized s-numbers of $\tau$-measurable operators.}
 Pac. J. Math.
123, 269-300(1986)



\bibitem{H1955}
  Hua, L. -K.:
 \textit{Inequalities involving determinants}.
 Acta Math. Sinica 5 (4),  463-470(1955) (in Chinese).
Translated into English: Transl. Amer. Math. Soc. Ser. II 32  265-272(1963)

\bibitem{H1965}
 Hua, L. -K.:
\textit{On an inequality of Harnack's type.} Sci. Sin. 14, 791(1965).

\bibitem{H1987}
Hiai, F.:
\textit{Majorization and stochastic maps in von Neumann algebras}.
J.  Math. Anal. Appl.,
127, 18-48(1987)

\bibitem{H1997}
Hiai, F.:
\textit{Log-majorizations and norm inequalities for exponential operators.}
Banach Center Publications, 38, 119-181(1997)

\bibitem{HSZ2020}
Huang, J., Sukochev,  F., Zanin, D.:
\textit{Logarithmic submajorisation and order-preserving linear isometries.}
Journal of Functional Analysis 278, 108352(2020)

\bibitem{JL2020}
Jiang, Z., Lin, M.:
\textit{A Harnack type eigenvalue inequality.}
Linear Algebra Appl.  585, 45-49(2020)


\bibitem{L2013}
 Lin, M.:
\textit{A Lewent type determinantal inequality.}
Taiwanses J. Math., 17, 1303-1309(2013)


\bibitem{LZ2017}
 Lin, M., Zhang, F.:
\textit{An extension of Harnack type determinantal inequality.}
Linear Multilinear Algebra 65, 2024-2030(2017)

\bibitem{M1965}
Marcus, M.:
\textit{ Harnack's and Weyl's inequalities.}
Proc. Amer. Math. Soc. 16, 864-866(1965)


\bibitem{M2007}
Kassmann, M.:
\textit{Harnack inequalities: an introduction.} Boundary Value Problems,
 2007, 21(2007)


\bibitem{MO1979}
Marshall, A. W.,  Olkin, I.:
\textit{Inequalities: theory of majorization and its applications,}
Academic Press, New York, (1979)

\bibitem{N1987}
Nakamura, Y.:
\textit{An inequality for generalized s-numbers}.
Integral Equations and Operator Theory, 10, 140-145(1987)


\bibitem{R1974}
Rudin, W.:
\textit{Real and complex analysis.}
McGraw-Hill, (1974)

\bibitem{T1964}
 Tung,S.H.:
\textit{Harnack's inequality and theorems on matrix spaces.}
Proc. Amer. Math. Soc. 15, 375-381(1964)

\bibitem{T1979}
 Takesaki, M.:
\textit{Theory of Operator Algebras I.} Springer-Verlag, New York, (1979)

\bibitem{O19701}
 Ovchinnikov,V. I.:
\textit{s-numbers of measurable operators.} Functional Analysis and Its Applications,
4, 236-242(1970)

\bibitem{O1970}
V. I. Ovchinnikov,
\textit{Symmetric spaces of measurable operators.}
Dokl. Akad. Nauk SSSR,
191, 769-771(1970)


\bibitem{W2013}
 Wang,F.-Y.:
\textit{Harnack inequalities for stochastic partial differential equations.}
 Springer (2013)

\bibitem{YZ2020}
Yang, C., Zhang, F.:
\textit{Harnack type inequalities for matrices in majorization.}
Linear Algebra and its Applications, 588, 196-209(2020)


\end{thebibliography}

\end{document}